\documentclass{amsart} 
\usepackage{latexsym,amssymb,graphicx,amscd,amsmath}

\newtheorem{thm}{Theorem}[section]
\newtheorem{lem}[thm]{Lemma}
\newtheorem{prop}[thm]{Proposition}
\newtheorem{cor}[thm]{Corollary} 
\newtheorem{de}[thm]{Definition}

\newtheorem{que}[thm]{Question}

\newcommand{\BZ}{{\mathbb{Z}}}

\newcommand{\bbG}{{\mathbb{G}}}
\newcommand{\BO}{{\mathcal{O}}}
\newcommand{\BD}{{\mathbb{D}}}

\newcommand{\BB}{{\mathcal{B}}}

\newcommand{\Si}{{\Sigma}}

\newcommand{\bb}{{\mathfrak{b}}}
\newcommand{\bg}{{\mathfrak{g}}}

\DeclareMathOperator{\genus}{genus}

\begin{document}

\title[Heegaard genus, and quantum invariants]{Heegaard genus, cut number, weak $p$-congruence, and quantum invariants}

\author{ Patrick M. Gilmer}
\address{Department of Mathematics\\
Louisiana State University\\
Baton Rouge, LA 70803\\
USA}
\email{gilmer@math.lsu.edu}
\thanks{ partially supported by NSF-DMS-0604580,  NSF-DMS-0905736}
\urladdr{www.math.lsu.edu/\textasciitilde gilmer/}

\subjclass[2000]{57M27}

\begin{abstract}  We use quantum invariants to define a 3-manifold invariant $j_p$ which lies in the non-negative integers. We relate $j_p$ to the Heegard genus, and the cut number. We show that $j_p$ is an invariant of weak $p$-congruence.
\end{abstract}

\maketitle

\section{Introduction}

Let $M$ be an oriented compact 3-manifold without boundary.
We use quantum invariants to give a lower bound on the Heegaard genus $g(M)$ of  $M$.
Let $p$ be an  prime greater than three and $q$ a primitive $pth$ root of unity. 
Let $\BO$ denote $\BZ[q]$ if $p \equiv 3 \pmod{4}$ and $\BZ[q,i]$ if $p \equiv 1 \pmod{4}$.  
Let $\BO^+$ denote $\BZ[q]$ in either case. We will use a version of the Witten-Reshitkhin-Turaev invariant $\mathcal{I}_p(M,L) \in \BO$ of a 3-manifold containing a framed link $L$.  Consider the $\BZ[q]$-ideal invariant:

\begin{de}\label{Jdef} \[J_p(M)= \left( \text{$\BO$-ideal generated by $\{ \mathcal{I}_p(M,L)| L \subset M\}$}\right) \cap \BO^+ \subset  \BO^+ .\]\end{de}

Let $h=1-q$.

\begin{thm}\label{upper}  $J_p(M) = h^{j_p(M)}$ for some non-negative integer $j_p(M).$
\end{thm}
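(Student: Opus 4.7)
The strategy is to exploit the Dedekind domain structure of $\BO^+ = \BZ[q]$, the ring of integers of the $p$th cyclotomic field $\BQ(q)$. In $\BO^+$ the rational prime $p$ is totally ramified as $(p) = (h)^{p-1}$, so $(h) = (1-q)$ is the unique prime ideal of $\BO^+$ lying above $p$. Since every nonzero ideal of a Dedekind domain factors uniquely into prime powers, the equality $J_p(M) = h^{j_p(M)}$ is equivalent to $J_p(M)$ being a nonzero $(h)$-primary ideal of $\BO^+$. Thus the plan is to verify (a) the $(h)$-adic valuation of elements of $J_p(M)$ attains a finite minimum $j_p(M) \geq 0$ (giving $J_p(M) \subseteq (h)^{j_p(M)}$), and (b) for every prime $\mathfrak{p}$ of $\BO^+$ not equal to $(h)$, some element of $J_p(M)$ lies outside $\mathfrak{p}$.

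For (a), I would invoke the standard integrality of WRT invariants (due to Habiro, Masbaum--Roberts, and others), namely $\mathcal{I}_p(M, L) \in \BO$ for every framed link $L \subset M$. For $p \equiv 1 \pmod 4$ one also needs $h^n\BO \cap \BO^+ = h^n \BO^+$, which follows from the fact that $\BO = \BO^+ \oplus \BO^+ \cdot i$ is free over $\BO^+$, so $\BO/\BO^+$ is torsion-free. Consequently $J_p(M)$ is a nonzero ideal of $\BO^+$, and setting $j_p(M)$ to be its $(h)$-adic valuation yields $J_p(M) \subseteq (h)^{j_p(M)}$ immediately.

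The substantive content is (b), which says no extraneous primes appear in the factorization of $J_p(M)$. Fix a rational prime $\ell \neq p$ and a prime $\mathfrak{p}$ of $\BO^+$ over $\ell$; the goal is to produce a framed link $L \subset M$ with $\mathcal{I}_p(M, L) \notin \mathfrak{p}\BO$. The natural approach is to fix a surgery presentation $M = S^3_{L_0}$ on a framed link $L_0 \subset S^3$ and apply the WRT surgery formula, which writes $\mathcal{I}_p(M, L)$ as a sum over colorings of $L_0$ of colored link invariants in $S^3$, multiplied by a Gauss-sum normalization. The Gauss sum is a unit outside $(h)$, and quantum integers $[n]_q$ at a $p$th root of unity are units outside $(h)$ whenever $p \nmid n$; choosing $L$ to consist of suitably colored meridians (or parallel pushoffs) of $L_0$ gives enough flexibility to make the colored-link sum a unit modulo $\mathfrak{p}$.

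The main obstacle is carrying out (b) uniformly over all primes $\mathfrak{p} \nmid p$: one must show that, for each such $\mathfrak{p}$, there actually exists a choice of $L$ making the surgery-formula sum nonzero modulo $\mathfrak{p}$. This is the heart of the proof and rests on the interplay between the skein-theoretic flexibility in choosing $L$ and the arithmetic of quantum integers at $p$th roots of unity; the rest of the argument is essentially bookkeeping with Dedekind ideals.
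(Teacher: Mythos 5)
Your reduction is sound as far as it goes: $(h)=(1-q)$ is the unique prime of $\BZ[q]$ above $p$, integrality of the invariants places $J_p(M)$ inside $\BO^+$ (and your observation that $h^n\BO\cap\BO^+=h^n\BO^+$ handles the case $p\equiv 1\pmod 4$), so the theorem is indeed equivalent to showing that $J_p(M)$ is nonzero and $(h)$-primary, i.e.\ that for every prime $\mathfrak p\neq (h)$ of $\BO^+$ some $\mathcal{I}_p(M,L)$ is a unit modulo $\mathfrak p$. But that last statement is the entire content of the theorem, and your proposal does not prove it: you describe a ``natural approach'' via the surgery formula, assert that colored meridians of a surgery presentation give ``enough flexibility to make the colored-link sum a unit modulo $\mathfrak p$,'' and then explicitly flag this as the main obstacle. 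Nothing in the sketch explains why, for a fixed $\mathfrak p\nmid p$, any choice of $L$ makes the sum over colorings of $L_0$ nonvanishing modulo $\mathfrak p$; the Gauss-sum and quantum-integer unit facts you invoke control the normalization, not the sum itself. So there is a genuine gap exactly where you say the heart of the argument lies.

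The paper closes this gap by a different mechanism, which is worth comparing. Take a genus-$g$ Heegaard splitting $M=H\cup_f -H$. The integral TQFT of [G1, GMW, GM1] provides a lattice $\mathcal{S}^+_p(\Sigma)\subset V_p(\Sigma)$ on which the mapping class group acts by $\BO^+$-automorphisms; hence the coordinates $x_{a,b,c}$ of $\rho_p(f)(\emptyset)$ in the orthogonal lollipop basis generate the unit ideal of $\BO^+$. Pairing $\rho_p(f)(\emptyset)$ against the elements $\tilde{\mathfrak{g}}(\alpha,\beta,\gamma)$ via the Hopf pairing gives $I_p(M,\tilde{\mathfrak{g}}(\alpha,\beta,\gamma))\sim x_{\alpha,\beta,\gamma}\,h^{n(\alpha,\beta,\gamma)}$ with $n(\alpha,\beta,\gamma)\le (d-1)g$, whence $(h)^{(d-1)g}\subseteq J_p(M)\subseteq\BO^+$. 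An ideal of a Dedekind domain containing a power of the prime $(h)$ is itself a power of $(h)$, and the bound $j_p(M)\le (d-1)g(M)$ of Theorem \ref{genus} comes for free. The essential arithmetic input is thus the unimodularity of the integral TQFT representation, not the surgery formula; to complete your route you would need some substitute for that input.
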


This invariant, $j_p(M)$, is quite computable. See Theorem \ref{compute}, and the remark following it.
It follows from work of H. Murakami \cite{M}, that if $M$ is a $\BZ_p$-homology sphere, then $\mathcal{I}_p(M, \emptyset)$ is not divisible by $h$.  Thus $j_p(M)=0$, when $M$ is a $\BZ_p$-homology sphere.  However $j_p(M)$ is more interesting when $H_1(M,\BZ_p)$ is larger. Cochran and Melvin \cite{CM} studied the divisibility of $\mathcal{I}_p(M, \emptyset)$  by powers of $h$ with some lower bounds on the exponent.   At this point, we do not know whether their methods may be used to study $j_p(M)$.

\begin{prop} \label{add} $j_p(M\# M')= j_p(M)+j_p( M')$.
\end{prop}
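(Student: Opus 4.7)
The plan is to prove the principal-ideal identity
\[
J_p(M\#M')=J_p(M)\cdot J_p(M')
\]
in $\BO^+$; by Theorem~\ref{upper} both sides are $(h^e)\BO^+$ for some exponents, and since $h$ is a non-unit of $\BO^+$, equality of principal ideals forces equality of exponents, giving additivity of $j_p$. I establish the two inclusions separately.

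For $J_p(M)\cdot J_p(M')\subseteq J_p(M\#M')$, given framed $L\subset M$ and $L'\subset M'$, place them on opposite sides of the connecting $2$-sphere in $M\#M'$ to form a split link $L\sqcup L'$. The standard connected-sum multiplicativity of the WRT invariant gives
\[
\mathcal{I}_p(M,L)\,\mathcal{I}_p(M',L')=\mathcal{I}_p(S^3,\emptyset)\,\mathcal{I}_p(M\#M',L\sqcup L'),
\]
so the left-hand side lies in the $\BO$-ideal $I_{M\#M'}$ generated by the values $\mathcal{I}_p(M\#M',-)$. Hence any product of generators of the $\BO$-ideals $I_M, I_{M'}$ lies in $I_{M\#M'}$; intersecting with $\BO^+$ gives $J_p(M)\cdot J_p(M')\subseteq J_p(M\#M')$, and thus $j_p(M\#M')\le j_p(M)+j_p(M')$.

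For the reverse inclusion, I would show that every $\mathcal{I}_p(M\#M',L)$ is divisible by $h^{j_p(M)+j_p(M')}$ in $\BO$. Isotope $L$ transverse to the connecting sphere $\Sigma\cong S^2$, cutting it into framed tangles $T\subset M\setminus B^3$ and $T'\subset M'\setminus B^3$ sharing $2n$ endpoints on $\Sigma$. The TQFT underlying $\mathcal{I}_p$ assigns to $\Sigma$ with $2n$ marked points a finitely generated $\BO$-module, in which $T$ and $T'$ determine vectors whose pairing equals $\mathcal{I}_p(M\#M',L)$. Expanding in an $\BO$-integral basis of this module---indexed, for instance, by admissible Kirby-colored cabling diagrams inside a $3$-ball---produces
\[
\mathcal{I}_p(M\#M',L)=\sum_i\alpha_i\,\mathcal{I}_p(M,L_i)\,\mathcal{I}_p(M',L_i'),\qquad\alpha_i\in\BO,
\]
where $L_i\subset M$ and $L_i'\subset M'$ are the framed links obtained by capping off $T$ and $T'$ with the diagram representing the $i$th basis vector. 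Using the $\BO$-integral refinement of Theorem~\ref{upper}---each $\mathcal{I}_p(M,L_i)$ lies in $h^{j_p(M)}\BO$, and similarly for $M'$---every summand, hence $\mathcal{I}_p(M\#M',L)$ itself, lies in $h^{j_p(M)+j_p(M')}\BO$; intersecting with $\BO^+$ yields the reverse inclusion.

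The principal obstacle is the integrality of the expansion coefficients $\alpha_i$: one needs a basis of the state module of the $2n$-punctured $S^2$ over $\BO$ whose pairing is also $\BO$-integral, so the $\alpha_i$ genuinely lie in $\BO$ rather than its fraction field. For $n=0$ the state space has rank one and the argument collapses to the split-link identity already used in the easy direction. For $n>0$, the requisite integrality is supplied by the $\BO$-integral form of the WRT theory at odd prime level $p$, where Kirby-colored cablings of the marked strands furnish the needed basis.
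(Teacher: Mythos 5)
Your first inclusion, $J_p(M)\cdot J_p(M')\subseteq J_p(M\#M')$ via split links, is correct and is exactly the paper's base case ($m=0$). The substantive direction, however, has a genuine gap, and it sits precisely at the step you yourself flag. You need an expansion $\mathcal{I}_p(M\#M',L)=\sum_i\alpha_i\,\mathcal{I}_p(M,L_i)\,\mathcal{I}_p(M',L_i')$ with $\alpha_i\in\BO$; in TQFT terms this asks that the pairing on the state space of the $2n$-punctured sphere be unimodular over $\BO$ on a lattice spanned by skeins in the ball, since otherwise the dual basis entering the gluing formula acquires denominators that are powers of $h$ and the divisibility estimate evaporates. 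This is \emph{not} supplied by the mere existence of the $\BO$-integral form of the theory: the integral lattices $\mathcal{S}^+_p(\Sigma)$ have decidedly non-unimodular pairings in general --- the paper itself uses $\langle\mathfrak{e}_i,\mathfrak{e}_i\rangle\sim h^{(d-1)(g-1)}$ --- so you would have to compute the relevant Gram matrix rather than cite the integral TQFT. For a sphere meeting $L$ in fewer than $p-1$ points this can in fact be done (the theta-symbols of admissible tree colorings are units in $\BO$), but once $L$ meets the sphere in at least $p-1$ points the skein module of the punctured ball only surjects onto $V_p(S^2,2n\text{ points})$ with nontrivial kernel, the diagrammatic Gram determinant vanishes, and ``Kirby-colored cablings of the marked strands'' is not a basis in any established sense. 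As written, your argument does not cover links meeting the connect-sum sphere in many points.

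The paper sidesteps all of this with an induction on the number of intersection points with the sphere $S$: insert a Jones--Wenzl idempotent on the strands crossing $S$; since $V_p(S^2,\text{point colored }m)=0$ the modified link has vanishing invariant, and the Jones--Wenzl recursion has coefficient a ratio of quantum integers $[k]$ with $0<k<p$, hence a unit in $\BO$, so the difference is an $\BO$-linear combination of invariants of links meeting $S$ in fewer points; when the number of strands is large one instead uses that the $(p-1)$st idempotent is zero as a map of outsides. I would either adopt that induction, or, if you want to keep your direct decomposition, prove the unimodularity of the Gram matrix of the colored-tree basis in the admissible range and supply a separate device for the many-strand case.
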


\begin{de} The cut number $c(M)$ of  $M$ is the maximal number of disjoint compact surfaces without boundary  $F_i$ that one can place in $M$ with $M \setminus \cup F_i$ connected.
\end{de}

The $p$-cut number, $c_p(M)$, is defined similarly except the surfaces are allowed to have certain  $1$-dimensional singularities.
See the discussion below Proposition \ref{c} The co-rank of a group $G$ is the maximal rank for a free group which $G$ can map onto.
The cut number of $M$ is the co-rank of $\pi_1(M)$  \cite{J}.  Let $d= (p-1)/2.$
The following theorem is a synthesis of a number of results which will be proved separately.

\begin{thm}  \[ 0 \le c(M) \le c_p(M)\le \frac{j_p(M)}{d-1}   \le g(M)\]
\end{thm}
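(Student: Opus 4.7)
The plan is to decompose the chain into four separate sub-inequalities and treat each one by its own technology.

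The two outer definitional bounds are immediate: $0 \le c(M)$ is trivial, and $c(M) \le c_p(M)$ holds because every system of ordinary disjoint non-separating closed surfaces is, with empty singular locus, a $p$-cut system.

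For the key lower bound $c_p(M)(d-1) \le j_p(M)$, the plan is to show that $h^{k(d-1)}$ divides every invariant $\mathcal{I}_p(M,L)$, where $k = c_p(M)$. Starting from a maximal $p$-cut family $F_1,\ldots,F_k$, dualize to obtain $k$ classes $\alpha_1,\ldots,\alpha_k \in H^1(M;\BZ_p)$ that are $\BZ_p$-linearly independent, with independence encoded precisely by the connectedness of $M \setminus \bigcup F_i$. The plan is then to leverage this cohomological nontriviality to force divisibility of $\mathcal{I}_p(M,L)$ in the spirit of the Murakami and Cochran--Melvin results cited above: each nontrivial $\BZ_p$-cohomology class forces a factor of $h^{d-1}$ in the invariant via a Kirby-color analysis in the presence of $L$, and linear independence of the $\alpha_i$ ensures that the $k$ factors multiply rather than collapse.

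For the upper bound $j_p(M) \le (d-1)g(M)$, fix a genus $g = g(M)$ Heegaard splitting $M = H_1 \cup_\Sigma H_2$. The plan is to exhibit a framed link $L \subset M$ with $\mathcal{I}_p(M,L)$ equal to a unit multiple of $h^{g(d-1)}$, which produces the required element $h^{g(d-1)} \in J_p(M)$. A natural candidate is built from the meridian curves of one handlebody pushed slightly off $\Sigma$, and a Kirby-calculus identification rewrites the resulting invariant as a unit multiple of $\mathcal{I}_p(\#^g(S^1 \times S^2), \emptyset)$. By the additivity in Proposition \ref{add}, the $h$-valuation of the latter equals $g$ times $j_p(S^1 \times S^2)$, and a direct computation via the 0-framed unknot surgery description of $S^1 \times S^2$ establishes $j_p(S^1 \times S^2) = d-1$.

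The main obstacle is the third inequality. The heuristic "each nontrivial $\BZ_p$-cohomology class contributes a factor of $h^{d-1}$" must be made rigorous in the presence of the arbitrary auxiliary link $L$, and one must verify that contributions from distinct non-separating surfaces combine independently rather than absorbing each other. This requires careful bookkeeping in the integral skein module modulo $h$. The upper bound is comparatively routine once the genus-one case is computed, and the remaining inequalities are either definitional or follow immediately from Proposition \ref{add}.
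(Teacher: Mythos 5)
Your decomposition into four sub-inequalities matches the paper's, and the two outer bounds are handled the same way. But the heart of the chain is the third inequality, and your mechanism for it is not just incomplete --- it is provably the wrong mechanism. You propose to dualize the $p$-cut system to classes $\alpha_1,\dots,\alpha_k\in H^1(M;\BZ_p)$ and then argue that \emph{linear independence} of these classes forces $h^{k(d-1)}$ to divide every $\mathcal{I}_p(M,L)$. If that implication held, it would give $j_p(M)\ge (d-1)\cdot\mathrm{rank}\, H^1(M;\BZ_p)$ for every $M$; but the paper computes $j_p(S^1\times S^1\times S^1)=d-1$ while $H^1(T^3;\BZ_p)$ has rank $3$, and likewise $j_p(\Sigma(2,3,6))=d-1$ while that manifold is a $\BZ_p$-cohomology $\#^2 S^1\times S^2$. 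The point you elide is that connectedness of $M\setminus\bigcup F_i$ is strictly stronger than independence of the dual classes: by Jaco's theorem it is equivalent to the classes assembling into a surjection $\pi_1(M)\to F_k$ onto a free group, and it is this free quotient (not mere cohomological independence) that makes the $k$ factors of $h^{d-1}$ multiply. The paper does not reprove this divisibility at all; it quotes it directly, namely $I_p(M,G)\in h^{(d-1)c(M)}\BO$ from \cite[Thm 15.1]{GM1} and $I_p(M,G)\in h^{(d-1)c_p(M)}\BO$ from \cite[Theorem (4.2)]{G2}. Without importing such a result, your sketch does not close, and the route you indicate for closing it cannot work as stated.

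For the upper bound $j_p(M)\le (d-1)g(M)$ your strategy (exhibit a link $L\subset M$ with $\omega$-colored invariant a unit times $h^{g(d-1)}$, via the surgery formula and $\mathcal{I}_p(\#^g S^1\times S^2,\emptyset)\sim h^{g(d-1)}$) is genuinely different from the paper's and is salvageable, but your candidate link is wrong: the meridians of a handlebody bound disks in $M$, so surgery on them with the surface framing produces $M\,\#\,(\#^g S^1\times S^2)$, whose invariant is $\mathcal{I}_p(M,\emptyset)\cdot h^{g(d-1)}$ up to units --- useless, since $\mathcal{I}_p(M,\emptyset)$ may itself be divisible by $h$. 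The correct link is the system of Dehn-twist curves in a collar of $\Sigma$ realizing a factorization of the gluing map, for which surgery does yield $\#^g S^1\times S^2$. By contrast, the paper proves the upper bound by pairing $\rho_p(f)(\emptyset)$ against the elements $\tilde{\mathfrak{g}}(\alpha,\beta,\gamma)$ dual (under the Hopf pairing) to the orthogonal lollipop basis of the integral lattice $\mathcal{S}^+_p(\Sigma)$: the coefficients of $\rho_p(f)(\emptyset)$ generate the unit ideal, and each pairing contributes at most $h^{(d-1)g}$. That argument costs more machinery but also delivers Theorem~\ref{compute} (computability of $j_p$ from the first column of $\rho_p(f)$) along the way; your surgery argument, once repaired, gives the inequality more cheaply but nothing more.
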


The ideal invariant $J_p$ first arose in the study of an equivalence relation among oriented compact 3-manifold without boundary, called weak $p$-congruence. This and two related but finer equivalence relations are studied in \cite{L,G2,G3}.

A $n/\ell$- surgery along a knot $K$ in a 3-manifold is the result of removing a tubular neighborhood of $K$ and regluing so that  a curve representing $ n  (\text{ the meridian}) +\ell (\text{ a longitude})$ bounds a disk in new solid torus.
The isotopy class of a longitude is not canonical but  any two longitudes differ by a multiple of the meridian.

\begin{de}   
If $\ell \equiv 0 \pmod{p}$, $n/\ell$-surgery is called  weak type-$p$ surgery
\end{de}

\begin{de} Two 3-manifolds are said to be  weakly $p$-congruence if one can pass from one to the other by a sequence of weak type-$p$ surgeries.
\end{de}

\begin{thm}\label{congI} If $M$ and $M'$ are weakly $p$-congruent, then $j_p(M)=j_p(M')$.  
\end{thm}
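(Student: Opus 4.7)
The plan is to reduce the theorem to the following claim: if $M'$ is obtained from $M$ by a single weak type-$p$ surgery, then $J_p(M) = J_p(M')$.  By Theorem \ref{upper} this immediately gives $j_p(M) = j_p(M')$, and by transitivity of weak $p$-congruence the general case follows.  So assume $M'$ is the result of $n/\ell$-surgery on a knot $K\subset M$ with $\ell \equiv 0 \pmod{p}$.

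For the inclusion $J_p(M')\subseteq J_p(M)$, let $L\subset M'$ be a framed link; after isotoping $L$ off the reglued solid torus, we may regard $L$ as a link in $M$ disjoint from $K$.  A rational-surgery formula for the quantum invariant expresses
\[
\mathcal{I}_p(M',L) \;=\; \sum_c \alpha_c\,\mathcal{I}_p\bigl(M,\,L\cup K^{(c)}\bigr),
\]
where the $K^{(c)}$ run over a fixed finite basis of the relevant skein module of the solid torus, and the coefficients $\alpha_c$ are matrix entries of the $\mathrm{SL}_2(\BZ)$-element $A=\begin{pmatrix} n & b\\ \ell & d\end{pmatrix}$ acting on the torus-skein module of the modular category.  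The key claim is that each $\alpha_c$ lies in $\BO$; granted this, every generator of $J_p(M')$ lies in the $\BO$-ideal generated by the $\mathcal{I}_p(M, L\cup K^{(c)})$, hence in $J_p(M)$ after intersection with $\BO^+$.

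The integrality of the $\alpha_c$ is the main obstacle.  The hypothesis $p\mid\ell$ forces the reduction of $A$ modulo $p$ to be upper triangular in $\mathrm{SL}_2(\BZ/p)$, i.e.\ a product of powers of $T=\begin{pmatrix}1 & 1\\ 0 & 1\end{pmatrix}$ and $\pm I$.  In the modular representation, $T$ acts diagonally by roots of unity lying in $\BO$, so any failure of $A$ to be integrally upper triangular over $\BZ$ is concentrated in $\ell/p$ copies of an $S$-matrix correction; the integrality results underlying the definition of $\mathcal{I}_p$ show that this correction contributes only factors in $\BO$.  This gives $\alpha_c\in\BO$ as required.  For the reverse containment $J_p(M)\subseteq J_p(M')$, observe that reversing the surgery amounts to rational surgery on the dual knot $K'\subset M'$ with coefficient $d/(-\ell)$; since $-\ell\equiv 0\pmod{p}$, this reverse surgery is itself of weak type-$p$, and the same argument applies verbatim, giving $J_p(M)=J_p(M')$ and hence $j_p(M)=j_p(M')$.
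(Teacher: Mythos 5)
Your overall strategy coincides with the paper's: reduce to a single weak type-$p$ surgery, show $J_p(M)=J_p(M')$ there, and use the fact that the inverse of a weak type-$p$ surgery is again weak type-$p$ to obtain both inclusions. The gap is in the step you yourself flag as ``the key claim,'' namely that the coefficients $\alpha_c$ in your surgery formula lie in $\BO$. This is not a technical detail to be waved through; it is the entire content of the theorem, and your justification does not establish it. Concretely, in the normalization used here (where $\mathcal{I}_p(S^3)=1$) the element $\omega$ has coefficients with denominators that are powers of $h$ --- equivalently, the $S$-matrix of the $SO(3)$ theory at a $p$th root of unity is not defined over $\BO$ --- so the naive expansion of a rational surgery formula produces coefficients of order $h^{-(d-1)}$, which are \emph{not} in $\BO$. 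Your sketch, that $T$ acts by roots of unity and that ``the $S$-matrix correction contributes only factors in $\BO$,'' asserts exactly the point at issue without proving it, and it does not visibly use the hypothesis $p\mid\ell$ in a way that kills those denominators. A sanity check shows something must be missing: if integrality of the $\alpha_c$ held for an arbitrary Dehn surgery, then your argument (forward inclusion, plus the same applied to the reversed surgery) would prove that $J_p$ is invariant under \emph{all} surgeries, hence constant on all closed oriented $3$-manifolds; but $j_p(S^3)=0$ while $j_p(S^1\times S^2)=d-1$. So the condition $\ell\equiv 0\pmod p$ must enter the integrality argument in an essential, quantitative way, and that is precisely what is absent from your proof.

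The paper sidesteps this by quoting \cite[Theorem 3.8]{G2}, which gives a statement both stronger and sharper than your linear-combination formula: for a weak type-$p$ surgery, $I_p(M,G)$ equals, up to a power of $\kappa$, the invariant of the \emph{unsurgered} manifold with the surgery curve, carrying a single color, adjoined to $G$ --- a single term, not an $\BO$-linear combination. Both inclusions then follow at once. To make your argument self-contained you would need to reprove that result (its proof rests on explicit computations showing how $p\mid\ell$ collapses the expansion of $\omega$ on the surgery chain), rather than merely asserting the integrality of the surgery coefficients.
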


\begin{que}\label{ques} Is $j_p(M)$ always divisible by $d-1$?  
\end{que}

We thank  Gregor Masbaum, Neal Stoltzfus, and the referee for useful suggestions and comments.

\section{quantum invariants  }

Let $A= - q^{(p+1)/2}\in \BO^+ $ and $\kappa = \pm i^{p+1}A^{-3}\in \BO$. Then $A$ is a primitive $2p$th root of unity, and $\kappa$ is a primitive $4p$th root of unity.

Let $G$ be a $p$-admissibly colored banded trivalent graph in a 3-manifold $M$ \cite{BHMV2}. For instance, $G$ could be a framed link colored one. 
Let $\mathcal{L}$ be a framed link in $S^3$ that is a surgery description of $M$.  We may pick the surgery description so that $G$ lies in the complement of $\mathcal{L}$.
Let $\omega$ be the linear combination over $\BO$ of $p$-colored cores of a solid torus specified
in \cite{ BHMV2}.  Let $s(L)$ denote signature of the linking matrix of $L$. Let $\mathcal{L}(\omega)$ be the linear combination over $\BO$ that we obtain if we replace each component of $\mathcal{L}$ by $\omega.$  

\[ I_p((M,G))= \kappa^{- s(L)} [L(\omega) \cup G] \in \begin{cases}
\BO^+ &\text{$\beta_1(M)$ is even or $p\equiv -1 \pmod{4}$} \\
\kappa  \BO^+  \subset \BO &\text{$\beta_1(M)$ is odd and $p\equiv 1 \pmod{4}$}
\end{cases}\]

Here $[\  ]$ denotes the Kauffman bracket of the expansion \cite{KL, MV}.  That $ I_p((M,G))$ is an algebraic integer is due to H. Murakami \cite{M}, and Masbaum-Roberts.

We say two elements of $\BO$ agree up to phase if one is  a power of $\kappa$ times the other. One should think about Definition \ref{Jdef} as follows: $J_p(M)$ is the $\BO^+$-ideal generated by the $I_p$-invariant of all links in $M$ after adjusting them by multiplying by a power of $\kappa$, if necessary, so that they lie in $\BO^+$. 
Also it is true that $J_p(M)$ is the $\BO^+$-ideal generated by the $I_p$-invariant of all $p$-admissibly colored banded trivalent graphs $G$ in $M$  adjusted, as above, to lie in $\BO^+$.
It suffices in the definition to let ${G}$ vary over some set of generators for $K(M, \BO)$, the Kauffman skein module of $M$ over  $\BO$.

\begin{proof}[Proof of Proposition \ref{add}] Given a link $\hat L$ in $M \#M'$, we wish to show that 
\linebreak $\mathcal{I}_p(M \#M',\hat L)$ can be written a linear combination over $\BO$ of some products  of invariants of pairs of links in $M$ and $M'$. This is proved by induction on $m$, half the minimal number (after isotopy) of tranverse intersections of $\hat L$  with $S$,  the sphere where the connected sum takes place. If $m$ is zero, let $L$ be the part of $\hat L$ in $M$, and $L'$ be the part of $\hat L$ in $M'$. Then $\mathcal{I}_p(M \#M',\hat L)= \mathcal{I}_p(M, L)\ \mathcal{I}_p(M', L')$.  If $0 \le m < d$,  let $\check L$ denote $\hat L$ modified by inserting in the $m$ strands crossing $S$ a  Jones-Wenzl idempotent.  As $V_p(S, \text{point colored $m$})=0$,
$\mathcal{I}_p(M \#M',\check L)=0.$ By the recursive definition of the Jones-Wenzl idempotent, we have $\mathcal{I}_p(M \#M',\hat L)- \mathcal{I}_p(M \#M',\check L)$ is given by a linear combination of the invariants of links which intersect $S$ in fewer than $m$ points.  If $m\ge  d$, we can play the same game using just $p-1$ of the strands. Again we have $\mathcal{I}_p(M \#M',\check L)=0$, as the $p-1$st idempotent is ``zero as a map of outsides'', in the sense of Lickorish \cite{Li}. Thus in either case, we can reduce to a fewer number of strands.
 \end{proof}

\begin{proof}[Proof of Theorem \ref{congI}] Suppose that $M$ is obtained  from $N$ by a weak type-$p$ surgery along $\gamma$, and $\gamma$ is in the complement of some  $p$-admissibly colored banded trivalent graph
 $G$ in $M$. Continue to denote by $G$ the resulting  graph in $M$ after the surgery. According the \cite[Theorem 3.8]{G2},
for some integer $m$, 
\[ I_p((M,G))= \kappa^m I_p(
(N, \gamma \text{ with some color} \cup  G)
) \]
The result follows easily from this.
\end{proof}

Note that the phase factor in the above definition
and the related phase anomalie in TQFT are unimportant for the issues that we discuss in this paper. 
So, in our discussion of TQFT, we will ignore the extra structure needed to resolve the phase anomalie.
We  say two elements of $x,y $ of a $\BO$-module agree up to units if one  is a unit of $\BO$ times the other. In this case, we write $x \sim y$.

Related to the invariant $I_p$, we have a  TQFT \cite{BHMV2} which assigns a scalar $<(M,G)>_p \in \BO[\frac 1 h]$.
One has:
 \[I_p(M,G) = \frac {<(M,G)>_p}{<S^3 \text{ with no colored link} >} \sim h^{d-1} <(M,G)>_p\in \BO\]
  
The TQFT associates to a surface $\Si$ a free $\BO[1/h]$-module $V_p(\Si)$. This module comes with a Hermitian form
$< , >$. If  $\Si$ is the boundary of a handlebody $H$, then $V_p(\Si)$ is a quotient of the Kauffman bracket skein module of $H$ over
$\BO[\frac 1 h].$ Let $\bbG(H)$ be the image of the Kauffman bracket skein module of $H$ over $\BO^+$ under this map.

By a spine $S$ for $H$, we mean a banded (in sense of \cite{BHMV2})  trivalent graph embedded in $H$ that is a deformation retract of $H$. In fact, $H$ can be identified with  $P \times I$, where $P$ is a $2$-sphere with $\genus(\Si)+1$ open discs removed, and $S$ is embedded in 
$P \times \{1/2\}$, and $S$ is a deformation retract of $P \times \{1/2\}$, and the banding is given by $P \times \{1/2\}$. We call this a flat spine for $P \times I$. Then $S$ is dual to a triangulation of a  quotient of the planar surface $P$  where the boundary components have been crushed to points which serve as the vertices of the triangulation. According to  \cite[corollary]{H}, and references therein, one may pass from one such triangulation to any other by a sequence of  moves which exchange a pair triangles which meet along an edge by another pair.  This corresponds to the statement that one can pass between any two flat spines for $H=P \times I$ by a sequence of moves which replace a  H-shaped subgraph with the H-shaped subgraph rotated $90$ degrees. 
 
By a coloring of a trivalent graph, we will mean an assignment to each edge of a color from the set of ``colors'' $\{0,1,\cdots p-2\}$, which is $p$-admissible in the sense of \cite{BHMV2}.
 If $S$ is a spine for $H$, we consider the set of  colorings $\mathfrak{c}(S)=\{\mathfrak{c}_i \}$ of $S$.  These colorings represent elements of $\bbG(H)$ which we also denote by the same symbol.  The even colorings are the colorings where every edge of $S$ is given an even color. Let 
$\mathfrak{e}(S)=\{\mathfrak{e}_i \}$ denote the even colorings of $S$.  If $S$ is a lollipop tree  \cite{GM1}, we may also speak of  the small colorings $\mathfrak{g}(S)=\{\mathfrak{g}_i \}$ of $S$. These are the coloring which assign a small color (ie. colors less than or equal to $d-1$)
to the loop edges of the lollipop spine.   The non-loop edges of a lollipop spine for a surface (without colored points) are always assigned even colors.

 \begin{prop}\label{bbG} If $S$ is a lollipop spine for  $H$, then   $\mathfrak{g}(S)$, and $\mathfrak{e}(S)$ are bases for  $\bbG(H)$.  If  $S$  is a spine for $H$,  
 $\mathfrak{e}(S)$  is a basis for  $\bbG(H)$. 
 \end{prop}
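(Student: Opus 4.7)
The plan is to handle the three assertions in turn, building up from the lollipop/small case to the general/even case.

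\emph{Lollipop spine, small colorings.} First I would show that $\mathfrak{g}(S)$ is a basis for $\bbG(H)$. Linear independence over $\BO^+$ follows from the BHMV fact \cite{BHMV2} that $\mathfrak{c}(S)$ is a basis of $V_p(\Sigma)$ over $\BO[1/h]$. For spanning, any class in $\bbG(H)$ is represented by an $\BO^+$-linear combination of banded links in $H$; expanding each link in the $\mathfrak{c}(S)$ basis and then successively fusing along the non-loop edges of the lollipop rewrites the sum in terms of $\mathfrak{g}(S)$. I would appeal to the integral structure results of Gilmer--Masbaum \cite{GM1} to verify that all coefficients in this expansion lie in $\BO^+$.

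\emph{Lollipop spine, even colorings.} Given the small coloring basis, I would construct the change of basis to $\mathfrak{e}(S)$. A lollipop spine consists of $g$ loops attached along a tree of non-loop edges, and the non-loop edges are already even for both bases; only the loop colors change, from $\{0,1,\ldots,d-1\}$ to $\{0,2,\ldots,p-3\}$. The transition matrix thus factors as a tensor product over the loops, and on each loop the block is a Gram-type matrix expressing small-colored cores of a solid torus in terms of even-colored ones. I would verify that this block has unit determinant in $\BO^+$ by identifying its entries with Kauffman bracket evaluations of colored Hopf links and confirming that the only quantum integers appearing are units.

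\emph{General spine, even colorings.} Using the combinatorial result cited from \cite{H}, any two flat spines of $H = P \times I$ are connected by a sequence of H-moves, so it suffices to handle a single such move. Even-ness of the internal edge is forced by admissibility when all four external edges of the H are even, so an H-move sends even colorings of $S$ to even colorings of $S'$, and the transition matrix decomposes into blocks indexed by the fixed external even colorings. I would compute each block's determinant using explicit formulas for the $6j$-symbols restricted to even entries, and check that all the quantum factors that appear are units in $\BO^+$.

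The main obstacle is this last step: producing an explicit enough formula for the even-coloring block of the $6j$-transition matrix to read off its determinant and confirm unit-ness in $\BO^+$. The analogous integrality statement on small colorings essentially appears in \cite{GM1}, but the even-coloring version requires a parallel but distinct computation. Once this is in hand, the first two steps are routine applications of the integral structure theory.
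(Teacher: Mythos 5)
Your skeleton matches the paper's in two of its three steps: spanning of $\bbG(H)$ by colorings of a lollipop spine via fusion, and the reduction of a general spine to a lollipop spine via Hatcher's H-moves together with invertibility over $\BO$ of the matrix of $6j$-symbols restricted to even colorings (the paper asserts this invertibility without more detail than you do, so your flagged ``main obstacle'' is at the same level of rigor as the published argument). The genuine divergence is in the middle step. You propose to pass from $\mathfrak{g}(S)$ to $\mathfrak{e}(S)$ by building a tensor-product transition matrix, one Gram-type block per loop, and verifying a unit determinant via Hopf-link evaluations. The paper avoids any such computation: it invokes the fact (from the proof of Lemma~(8.2) of Gilmer--Masbaum--van Wamelen) that, up to units of $\BO$, $\mathfrak{g}(S)$ and $\mathfrak{e}(S)$ are the \emph{same subset} of $\bbG(H)$ --- coloring a lollipop loop by $c$ and by $p-2-c$ gives the same skein element up to a unit, which matches small colors bijectively with even colors. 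So the ``transition matrix'' is a permutation matrix times a diagonal matrix of units, and no determinant of a dense block needs to be evaluated. Your route could presumably be pushed through, but it replaces a one-line identification with a computation that the literature does not supply in the form you need.

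One small but real error to fix: you assert that $\mathfrak{c}(S)$, the set of \emph{all} $p$-admissible colorings of $S$, is a basis of $V_p(\Sigma)$ over $\BO[1/h]$. It is not --- it has too many elements and is only a spanning set; it is the even colorings $\mathfrak{e}(S)$ (equivalently, after the unit identification above, the small colorings) that form a basis. Linear independence of $\mathfrak{g}(S)$ and $\mathfrak{e}(S)$ in $\bbG(H)$ should be deduced from the fact that $\mathfrak{e}(S)$ is a basis of $V_p(\Sigma)$, exactly as the paper does.
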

 
 \begin{proof}
Let  $S$ be  a lollipop spine.  Using fusion and vanishing results in recoupling theory, one sees that  $\bbG(H)$ is generated by $\frak{c}(S)$. 
Up to units,  $\frak{g}(S)$ and $\frak{e}(S)$ are  the same subsets of $\bbG(H)$, see proof of \cite[Lemma (8.2)]{GMW}. Thus 
 each of these subsets  spans  $\bbG(H)$. As  $\frak{e}(S)$ is also a basis for  $V_p(\Si)$, the first claim holds.
 
 Let $S$ be  a (not necessarily) lollipop spine for $H$. View $S$ as a flat spine in  $P \times I$. Let $S'$ be a flat lollipop spine in $P \times I$.  
 As the matrix of $6j$ symbols which express  even colorings of an H-shaped graph as linear combinations of  
even colorings of that  H-shaped graph rotated $90$ degrees is invertible over $\BO$, and $S$ and $S'$ are related by such moves, the second claim holds. 
\end{proof}

 Order  $\mathfrak{g}(S)$, and $\mathfrak{e}(S)$  so that the zero coloring is first. Write $ \mathfrak{e}_1=  \emptyset$.
 Given $f$ in $\Gamma_\Si$, the mapping class group of $\Si$, the TQFT assigns a map $\rho_p(f): V_p(\Si) \rightarrow V_p(\Si)$.   
  
 \begin{thm}\label{compute} Suppose $S$ is a spine for $H$ and $\Si= \partial(H)$. If $M= H \cup_f  -H$ is a Heegard splitting of genus $g$ , then $J_p(M)$ is spanned by $h^{(d-1)g}$  times the entries in the first column of the matrix for  $\rho_p(f)$ with respect to $\frak{e}(S)$.\end{thm}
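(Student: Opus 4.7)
The plan is to use the Heegaard decomposition of $M$ to express each generator of $J_p(M)$ as a TQFT pairing on $V_p(\Si)$, unfold this pairing via the basis $\mathfrak{e}(S)$, and extract a common factor $h^{(d-1)g}$ coming from the norms of the basis elements. First I would reduce the test graphs: by the remark following Definition \ref{Jdef}, $J_p(M)$ is the $\BO^+$-ideal generated by the phase-adjusted values of $I_p(M,G)$ as $G$ ranges over a generating set of the Kauffman skein module of $M$. Every such graph isotopes into $H$, and the pairing computing $I_p(M,G)$ factors through the map $K(H,\BO)\to V_p(\Si)$, whose $\BO^+$-image is $\bbG(H)$ by definition. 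By Proposition \ref{bbG}, the set $\mathfrak{e}(S)$ is a basis of $\bbG(H)$, so $J_p(M)$ is generated as an $\BO^+$-ideal by $\{I_p(M,\mathfrak{e}_i)\}_i$.

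Next I invoke the Heegaard gluing formula in the TQFT. Combining $I_p(M,G)\sim h^{d-1}\langle (M,G)\rangle_p$ with the gluing expression
\[
\langle (M,\mathfrak{e}_i)\rangle_p = \langle \mathfrak{e}_i,\,\rho_p(f)(\mathfrak{e}_1)\rangle_{V_p(\Si)}
\]
and writing $\rho_p(f)(\mathfrak{e}_1)=\sum_j \alpha_j\,\mathfrak{e}_j$, where the $\alpha_j$ are by definition the entries of the first column of the matrix of $\rho_p(f)$ with respect to $\mathfrak{e}(S)$, one obtains
\[
I_p(M,\mathfrak{e}_i)\sim h^{d-1}\sum_j \alpha_j\,\langle \mathfrak{e}_i,\mathfrak{e}_j\rangle_{V_p(\Si)}.
\]
The main step is then to compute the Gram matrix: I claim $\langle \mathfrak{e}_i,\mathfrak{e}_j\rangle_{V_p(\Si)}\sim \delta_{ij}\,h^{(d-1)(g-1)}$. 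I would verify this first when $S$ is a lollipop spine by identifying the Hermitian pairing with the invariant of $\#_g(S^2\times S^1)$ carrying two parallel colored copies of the lollipop. Fusion in each $S^2\times S^1$ summand, combined with the vanishing of $\langle S^2\times S^1,e_c\rangle_p$ for $c\neq 0$, forces the pairing to be zero unless $i=j$. The common diagonal value is computed from the connected-sum formula $\langle M\#M'\rangle_p=\langle M\rangle_p\langle M'\rangle_p/\langle S^3\rangle_p$ using $\langle S^3\rangle_p\sim h^{-(d-1)}$ and $\langle S^2\times S^1\rangle_p\sim 1$. For a general spine $S$, the lollipop-to-$S$ transition is given by the change-of-basis matrix built from $6j$-symbols (as in the proof of Proposition \ref{bbG}), which is invertible over $\BO$; so both the diagonality and the common $h$-valuation are preserved.

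Substituting yields $I_p(M,\mathfrak{e}_i)\sim h^{d-1}\cdot h^{(d-1)(g-1)}\,\alpha_i=h^{(d-1)g}\,\alpha_i$, and these elements generate $J_p(M)$, as claimed. The main obstacle is the Gram-matrix computation, in particular pinning down the common $h$-valuation $h^{(d-1)(g-1)}$ for the diagonal entries; once this is in place, the reduction to $\mathfrak{e}(S)$ and the TQFT gluing formula are essentially bookkeeping built on Proposition \ref{bbG}.
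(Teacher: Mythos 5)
Your proposal is correct and follows essentially the same route as the paper: reduce the generating set of $J_p(M)$ to the basis $\mathfrak{e}(S)$ via $\bbG(H)$, apply the Heegaard gluing formula $I_p(M,\mathfrak{e}_i)\sim h^{d-1}\langle\rho_p(f)(\mathfrak{e}_1),\mathfrak{e}_i\rangle_\Si$, and use orthogonality of $\mathfrak{e}(S)$ with $\langle\mathfrak{e}_i,\mathfrak{e}_i\rangle\sim h^{(d-1)(g-1)}$ to extract the factor $h^{(d-1)g}$. The only difference is that the paper simply cites \cite[Theorem 4.11]{BHMV2} for the Gram-matrix fact, whereas you sketch its derivation from the double $\#_g(S^1\times S^2)$; your sketch is consistent with that reference.
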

 
 \begin{proof} Let  $\mathcal{M}(f,\mathfrak{e}(S),p)$ denote this matrix whose entries  lie in $\BO^+[1/h].$   Any link $L$ in $M$ can be isotoped into $-H$ where it can be written (as `a map of outsides'), as an $\BO^+$-linear combination of elements from $\mathfrak{e}(S)$. Let $\partial H= \Si$.  Recall  that $\mathfrak{e}_i$ is orthogonal with respect to the Hermitian pairing  $\langle \ ,\ \rangle_{\Si}$ on  $V_p(\Si)$. Moveover 
 $\langle  \mathfrak{e_i} ,\mathfrak{e_i} \rangle \sim h^{(d-1)(g-1)}$, by \cite[Theorem 4.11]{BHMV2}. Also we have that
 
\begin{align} \notag I_p(M,\mathfrak{e}_i)&=  h^{(d-1)}<(M, \mathfrak{e}_i)> _p= h^{(d-1)}\langle \rho_p(f)(\mathfrak{e}_1), \mathfrak{e}_i \rangle \rangle_{\Si} \\
 \notag &= 
h^{(d-1)} \langle \sum \mathcal{M}(f,\mathfrak{e}(S),p)_{j,1}(\mathfrak{e}_j), \mathfrak{e}_i\rangle_{\Si} =
 h^{(d-1)}\mathcal{M}(f,\mathfrak{e}(S),p)_{,i,1}  \langle \mathfrak{e}_i, \mathfrak{e}_i  \rangle _{\Si}
 \\
 \notag &\sim h^{(d-1)g} \mathcal{M}(f,\mathfrak{e}(S),p)_{i,1}
 \end{align}
 As each basis element is an $\BO^+$-linear combination of links, we are done.
 \end{proof}

If $f \in \Gamma_\Si$ is written as a product of elements from a certain set of generating Dehn twists, then 
 A'Campo's package \cite{A'C} calculates $\mathcal{M}(f,\mathfrak{e}(S),p)$.  Thus it may be used to calculate $j_p(M)$ within Pari \cite{P}.

From now on, we hold fixed a choice of handlebody $H$ and a lollipop spine $S$ in $H$, and we sometimes omit mention of them from our notation and hypotheses. Thus for instance, we may just saying `coloring' instead of `coloring of $S$, a spine for $H.$'

\section{upper bounds on $j_p(M)$}

The integral version of this TQFT \cite{G1,GMW,GM1} assigns to a  surface $\Si$
a $\BO^+$-lattice  $\mathcal{S}^+_p(\Sigma) \subset V_p(\Sigma)$. 
$\mathcal{S}^+_p(\Sigma)$ is free with a  basis \cite{GM1}  $\BB= \{\bb(a,b,c) \}$  indexed by small    colorings
 $(a,b,c)$.  This is called the lollipop basis. 
It is obtained from the basis for $\bbG(H)$ given by small colorings of $S$ by a unimodular (for us,  unimodular means that the change of basis matrix  has a unit of $\BO^+$ as its determinant) triangular basis change followed by a rescaling by powers 
 of $h$.  In this paper we need only consider the case when $\Si$ has no colored points. In this case, the trunk half color denoted $e$ in \cite{GM1,GM2} is taken to be zero.

Besides the Hermitian pairing $< , >$ on $V_p(\Si)$, we consider a bilinear symmetric form, called the
Hopf pairing in \cite{GM2}:
 
\[((\quad, \quad)): V_p(\Sigma) \times V_p(\Sigma)\rightarrow \BO_p[1/h] \] 
which restricts to
\[((\quad, \quad)): \mathcal{S}^+_p(\Sigma) \times \mathcal{S}^+_p(\Sigma)\rightarrow \BO^+.\] 

If $x$, $y$ are represented by colored graphs $X$ and $Y$ in $H$, $((x,y ))$ is the bracket evaluation of the result  of leaving $X$ in $H$ but putting $Y$ in the complementary handlebody $H'$. See \cite{GM2}. 
 \begin{figure}[h]
\includegraphics[height=1.2in]{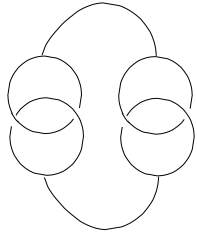}
\caption{In genus 2, H and $H'$ are neighborhoods of these graphs }
\end{figure} 
 
In \cite{GM2}, we defined a new basis for $\{ \tilde\bb(a,b,c)|\text{ $(a,b,c)$ is a small coloring of $G$} \}$   for $\mathcal{S}^+_p(\Sigma)$ , called the orthogonal lollipop basis.  It is obtained from the lollipop basis  by a unimodular  triangular basis change. This new basis is orthogonal with respect to the $((\quad, \quad))$.

 We use $\lfloor x \rfloor$ to denote the floor of $x$ i.e. the greatest integer less than or equal to  $x$. Similarly we use $\lceil x \rceil$ to denote the ceiling of $x$ i.e. the smallest integer greater than or equal to  $x$. Recall the eigenvalue for encircling a strand colored $i$,   $\lambda_i= -q^{i+1}-q^{-i-1}$. 

For a small  coloring $(a,b,c)$, define 
  \[\tilde\bg(a,b,c) = \bg(a,0,c) \prod_{j=1}^{g(M)} 
{\prod_{i=a_j}^{a_j+b_j-1}(z_j-\lambda_i)}~.  \]
  
  \begin{thm} $ \tilde \bg= \{ \tilde \bg(a,b,c)|\text{ $(a,b,c)$ is a small coloring} \}$ is a basis of $\bbG(H)$. 
 One has that  \[ \tilde \bg(a,b,c) 
  \sim h^{\lfloor \frac 1 2 (\sum_j a_j)\rfloor + \sum_j b_j } \cdot \tilde \bb(a,b,c)  .\]
  Moreover 
 \[\left(\left(\tilde \bb(a,b,c),\tilde \bg(a,b,c)\right) \right)\sim h^{ \lceil \frac  1 2 (\sum_j a_j)\rceil + \sum_j b_j } . \] 
 \end{thm}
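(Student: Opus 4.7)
The plan is to establish the three assertions in sequence, leveraging the orthogonality of $\tilde{\bb}$ with respect to the Hopf pairing proved in~\cite{GM2}.

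First, to show that $\tilde{\bg}$ is a basis of $\bbG(H)$, it suffices by Proposition~\ref{bbG} to verify that the transition from $\{\bg(a,b,c)\}$ to $\{\tilde{\bg}(a,b,c)\}$ is unimodular over $\BO^+$. Since the definition of $\tilde{\bg}(a,b,c)$ involves only the loop variables $z_j$ and leaves the trunk coloring $c$ untouched, one may reduce to analyzing the operator $\prod_{i=a_j}^{a_j+b_j-1}(z_j - \lambda_i)$ acting on the loop subspace at each $j$. The meridian $z_j$ acts on the loop-color decomposition with eigenvalues $\lambda_i$, so these polynomial operators admit a triangular decomposition relative to the small-coloring basis. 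A routine induction in $b_j$ then shows that $\tilde{\bg}(a,b,c)$ equals a unit multiple of $\bg(a,b,c)$ plus terms with strictly smaller $b'$ in the coordinate-wise order. This upper-triangular form with unit diagonal yields the basis claim.

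Next, for the scaling relation $\tilde{\bg}(a,b,c) \sim h^{\lfloor (\sum_j a_j)/2\rfloor + \sum_j b_j}\,\tilde{\bb}(a,b,c)$, I would chain the two known unimodular triangular changes of basis: $\tilde{\bb}$ is obtained from the lollipop basis $\bb$ by a unimodular triangular change (see the paragraph above Figure~1), and $\bb$ itself arises from the small coloring basis $\bg$ by a unimodular triangular change followed by an explicit $h$-rescaling recorded in~\cite{GM1}. Step one gives the triangular expression of $\tilde{\bg}$ in terms of $\bg$, so it remains to compare $h$-valuations term by term. Expanding each $\lambda_i = -q^{i+1} - q^{-i-1}$ in powers of $h = 1-q$ and tracking the leading contribution of the $z_j$-polynomial supplies the factor $h^{\sum_j b_j}$, while the parity-dependent correction $h^{\lfloor (\sum_j a_j)/2\rfloor}$ comes from the $\bg\to\bb$ rescaling of~\cite{GM1}.

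Finally, for the Hopf-pairing formula I would invoke the orthogonality of $\tilde{\bb}$ under $((\cdot,\cdot))$ proved in~\cite{GM2}, together with the explicit norm-squared $((\tilde{\bb}(a,b,c),\tilde{\bb}(a,b,c))) \sim h^{\lceil(\sum_j a_j)/2\rceil - \lfloor(\sum_j a_j)/2\rfloor}$ computed there (which equals $h$ or $1$ according as $\sum_j a_j$ is odd or even). Bilinearity together with step two then gives
\[
((\tilde{\bb}(a,b,c),\tilde{\bg}(a,b,c))) \sim h^{\lfloor(\sum_j a_j)/2\rfloor + \sum_j b_j}\,((\tilde{\bb},\tilde{\bb})) \sim h^{\lceil(\sum_j a_j)/2\rceil + \sum_j b_j}.
\]

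The principal obstacle is the middle step: matching the two-stage unimodular basis change (small colorings $\to \bb \to \tilde{\bb}$) with the explicit $(z_j - \lambda_i)$ product, and verifying that the $h$-valuation glues across loops into the \emph{floor of the sum} $\lfloor(\sum_j a_j)/2\rfloor$ rather than the naive $\sum_j \lfloor a_j/2\rfloor$. This nonlocal parity coupling reflects a subtle global adjustment inside the~\cite{GM1, GM2} construction that must be unpacked carefully; once this bookkeeping is controlled, the basis claim and the pairing identity follow uniformly from triangularity and orthogonality.
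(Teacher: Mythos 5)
Your overall architecture matches the paper's: the paper disposes of the first claim by the same unimodular-triangular-change-of-basis observation, and handles the second and third claims by citing \cite[Equation 6]{GM2} and \cite[Thm (3.1), Cor (3.2)]{GM2} respectively. Your first and third steps are sound: triangularity of the $(z_j-\lambda_i)$ operators with unit diagonal does give the basis claim, and once the second claim is in hand, bilinearity of the Hopf pairing together with the norm $((\tilde\bb(a,b,c),\tilde\bb(a,b,c)))$ yields the third claim exactly as you compute.

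The gap is in your middle step. You propose to deduce $\tilde\bg(a,b,c)\sim h^{\lfloor\frac12\sum_j a_j\rfloor+\sum_j b_j}\,\tilde\bb(a,b,c)$ by chaining the two triangular basis changes and ``comparing $h$-valuations term by term.'' That argument only controls the diagonal (leading) coefficients: it shows that $\tilde\bg(a,b,c)$ and $h^{N}\tilde\bb(a,b,c)$ have the same leading term in the $\bg$-expansion up to a unit, but their difference could still be a nonzero combination of lower-order basis vectors, so you have not shown that the two elements are unit multiples of one another --- which is what ``$\sim$'' asserts and what your third step then consumes. To close this you need either the explicit identity of \cite[Equation 6]{GM2}, which is exactly what the paper invokes, or an a priori reason the lower-order terms must agree: for instance, show that $\tilde\bg$ is itself orthogonal for the Hopf pairing (this is what the $(z_j-\lambda_i)$ factors are engineered to achieve) and then use the uniqueness, up to scalars, of a Hopf-orthogonal family that is triangular with respect to a fixed ordered basis. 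You flag this step yourself as the ``principal obstacle,'' and it is indeed where the substance lies; as written the proposal does not yet supply it.
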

 
 \begin{proof}  Since $\tilde \bg(a,b,c)$ is obtained from $ \bg(a,b,c)$  by  a unimodular triangular change of basis, the first claim holds.
 Claim 2 follows from \cite[Equation 6]{GM2}.   Then \cite[Thm (3.1),Cor(3.2)]{GM2} implies third claim.
 \end{proof}

Given  an element $x \in  \mathcal{S}^+_p(\Sigma)$, and a basis $\frak u= \{u_i\}$ of $\mathcal{S}^+_p(\Sigma)$ define $\phi_{\frak u}(x)$ to be the $\BO^+$-ideal generated by the coeficients of $x$ when written uniquely as a linear combination of  $\frak u.$

\begin{lem} If $\frak u= \{ u_i\}$ and $\frak u'  = \{ u'_i\}$ are two bases of   $\mathcal{S}^+_p(\Sigma)$ and  $x \in  \mathcal{S}^+_p(\Sigma)$ , then $\phi_{\frak u}(x)= \phi_{\frak u'}(x)$
\end{lem}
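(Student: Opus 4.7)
The plan is to exploit the fact that, since $\frak{u}$ and $\frak{u}'$ are both bases of the free $\BO^+$-module $\mathcal{S}^+_p(\Sigma)$, the change-of-basis matrix $C=(c_{ji})$ defined by $u_i = \sum_j c_{ji}\, u'_j$ lies in $GL_n(\BO^+)$: both $C$ and $C^{-1}$ have entries in $\BO^+$. This is the only structural fact needed; everything else is bookkeeping.

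First I would write $x = \sum_i a_i u_i = \sum_j b_j u'_j$, with $a_i, b_j \in \BO^+$ uniquely determined. Substituting the expansion of $u_i$ in terms of $\frak{u}'$ and comparing coefficients yields $b_j = \sum_i c_{ji}\, a_i$. Thus every $b_j$ is an $\BO^+$-linear combination of the $a_i$'s, which shows the ideal inclusion
\[ \phi_{\frak{u}'}(x) = (b_j : j) \;\subseteq\; (a_i : i) = \phi_{\frak{u}}(x). \]

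Next I would repeat the argument with the roles of $\frak{u}$ and $\frak{u}'$ interchanged, using the matrix $C^{-1}$ (whose entries also lie in $\BO^+$ by the invertibility statement above) to express each $a_i$ as an $\BO^+$-combination of the $b_j$'s. This gives the reverse inclusion $\phi_{\frak{u}}(x) \subseteq \phi_{\frak{u}'}(x)$, and the two inclusions together prove the lemma.

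There is really no substantive obstacle here; this is the standard fact that the \emph{content ideal} of an element in a free module over a commutative ring is independent of the choice of basis, i.e., it is a module invariant. The only point worth emphasizing in the write-up is the invertibility of the change-of-basis matrix \emph{over $\BO^+$} (not merely over $\BO^+[1/h]$), which is guaranteed because both $\frak{u}$ and $\frak{u}'$ are bases of the \emph{same} $\BO^+$-lattice $\mathcal{S}^+_p(\Sigma)$ rather than of its extension of scalars.
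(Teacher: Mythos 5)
Your proof is correct and follows essentially the same route as the paper: expand $x$ in both bases, use the change-of-basis matrix over $\BO^+$ to get one ideal inclusion, and invoke symmetry (invertibility of the matrix over $\BO^+$) for the reverse. No issues.
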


\begin{proof} One has that $u_i= \sum_j w_{i,j}  u'_j$.   If $x= \sum_i a_i   u_i$ , $x= \sum_j \sum_i w_{i,j} a_i  u_j'$. Then
$\phi_{\frak u}(x)$ is generated by $a_i $, and $\phi_{\frak u'}(x)$ is generated by $\sum_i w_{i,j} a_i $. Thus 
$\phi_{\frak u'}(x) \subset \phi_{\frak u}(x).$ By symmetry, we have $\phi_{\frak u}(x) \subset \phi_{\frak u'}(x).$
\end{proof}

We define  $\phi(x) =\phi_{\frak u}(x),$ where $\frak u$ is any basis of $\mathcal{S}^+_p(\Sigma).$
We have that  $\emptyset= \tilde \bb(0,0,0) \in \mathcal{S}_p(\Sigma)$.   
So $\phi(\emptyset)=(1).$ Moreover if $F$ is any automorphism of $\mathcal{S}^+_p(\Sigma)$, $\phi(F(\emptyset))=(1).$
 
We prove two assertions made in the introduction together.

\begin{thm}\label{genus} $J_p(M)= (h)^{j_p}$, for some integer $j_p(M)\le (d-1) g(M)$. 
\end{thm}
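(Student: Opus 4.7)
The plan is to combine Theorem~\ref{compute} with the observation that $\phi(\rho_p(f)(\emptyset)) = (1)$ for any automorphism $\rho_p(f)$ of $\BSplus(\Si)$, as noted just above. Fix a Heegaard splitting $M = H \cup_f -H$ of minimal genus $g = g(M)$ and set $v := \rho_p(f)(\emptyset)$. Writing $v = \sum_i c_i \bb_i$ in the lollipop basis $\BB$ of $\BSplus(\Si)$ yields $c_i \in \BO^+$ with $(c_i) = (1)$, while writing $v = \sum_i m_i \mathfrak{e}_i$ in the basis $\mathfrak{e}(S)$ of $\bbG(H) \subset V_p(\Si)$ yields $m_i \in \BO^+[1/h]$. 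By Theorem~\ref{compute}, $J_p(M)$ is generated, up to units, by the collection $\{h^{(d-1)g}\, m_i\}$.

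Using the unimodular triangular changes $\bg \leftrightarrow \tilde\bg$ and $\bb \leftrightarrow \tilde\bb$ together with the relation $\tilde\bg(a,b,c) \sim h^{n(a,b,c)}\, \tilde\bb(a,b,c)$, where $n(a,b,c) = \lfloor \tfrac{1}{2}\sum_j a_j \rfloor + \sum_j b_j \ge 0$, the change-of-basis matrix $T$ defined by $\bb_i = \sum_j T_{ji}\, \mathfrak{e}_j$ is triangular with diagonal entries units times $h^{-n_i}$. In particular $T$ is invertible over $\BO^+[1/h]$, so $(m_i)\cdot\BO^+[1/h] = (c_i)\cdot\BO^+[1/h] = \BO^+[1/h]$, and consequently $J_p(M)\cdot\BO^+[1/h] = \BO^+[1/h]$. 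Since $\BO^+$ is a Dedekind domain in which $(h)$ is the unique prime above $p$, this forces $J_p(M) = (h^{j_p})$ for some integer $j_p \ge 0$, which settles the first assertion.

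For the upper bound $j_p \le (d-1)g$, it suffices to exhibit $\beta_i \in \BO^+$ with $\sum_i \beta_i m_i = 1$, since then $h^{(d-1)g} = \sum_i \beta_i (h^{(d-1)g} m_i) \in J_p(M)$. The key algebraic point is that although $T$ has $h$-denominators, its transpose inverse $(T^T)^{-1}$ is $\BO^+$-valued: factoring $T^T = D(I + D^{-1} L)$ with $D = \mathrm{diag}(u_i h^{-n_i})$ and $L$ strictly lower triangular, the matrix $D^{-1}L$ has entries in $\BO^+$ (the $h^{-n_i}$ from $L$ is cancelled by $h^{n_i}$ from $D^{-1}$), and the factors $h^{n_j} \in \BO^+$ from $D^{-1}$ on the right in $(T^T)^{-1} = (I + D^{-1}L)^{-1} D^{-1}$ absorb the remaining denominators. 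Given this, since $(c_i) = (1)$ we may choose $\vec{\delta} \in (\BO^+)^N$ with $\sum_j \delta_j c_j = 1$ and set $\vec{\beta} = (T^T)^{-1} \vec{\delta} \in (\BO^+)^N$; using $\vec m = T\vec c$,
\[
\sum_i \beta_i m_i \;=\; \vec{\beta}^{\,T} T \vec{c} \;=\; (T^T \vec{\beta})^{T} \vec{c} \;=\; \vec{\delta}^{\,T} \vec{c} \;=\; 1,
\]
which completes the proof. The main obstacle is the structural verification that $T$ has the claimed triangular form and that $(T^T)^{-1}$ is $\BO^+$-valued; once these are in hand the rest is mechanical.
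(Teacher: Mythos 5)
Your argument is correct in outline, but it takes a genuinely different route from the paper. The paper does not go through Theorem~\ref{compute} and the basis $\mathfrak{e}(S)$ at all: it expands $\rho_p(f)(\emptyset)=\sum x_{a,b,c}\,\tilde \bb(a,b,c)$ in the \emph{orthogonal} lollipop basis and then evaluates $I_p(M,\tilde\bg(\alpha,\beta,\gamma))$ as the Hopf pairing $((\rho_p(f)(\emptyset),\tilde\bg(\alpha,\beta,\gamma)))$. Orthogonality of $\tilde\bb$ for $((\,,\,))$ together with $((\tilde\bb(\alpha,\beta,\gamma),\tilde\bg(\alpha,\beta,\gamma)))\sim h^{\lceil\frac12\sum\alpha_i\rceil+\sum\beta_j}$ gives $I_p(M,\tilde\bg(\alpha,\beta,\gamma))\sim x_{\alpha,\beta,\gamma}h^{e(\alpha,\beta)}$ with $e(\alpha,\beta)\le(d-1)g$ term by term, and $(x_{a,b,c})=(1)$ finishes the proof with no matrix inversion. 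Your route replaces this by the Hermitian-pairing computation of Theorem~\ref{compute} plus linear algebra on the transition matrix $T$ between $\mathfrak{e}(S)$ and the lollipop basis; both arguments ultimately rest on the same input, namely that $\rho_p(f)$ preserves $\mathcal{S}^+_p(\Sigma)$. What the paper's approach buys is sharper, per-generator valuation data (the exact exponents $\lceil\frac12\sum\alpha_i\rceil+\sum\beta_j$, which is also what makes Question~\ref{ques} natural); what yours buys is that it only needs the lattice structure, not the Hopf pairing. The one load-bearing step you should make honest is the claim that column $i$ of $T$ has denominator at most $h^{n_i}$: this holds because the lollipop basis is a unimodular triangular change \emph{followed by} the rescaling, so $T=T'\,\mathrm{diag}(u_ih^{-n_i})$ with $T'$ unimodular triangular over $\BO^+$, and the exponents are forced to be $n_i$ by comparing diagonal terms with $\tilde\bg(a,b,c)\sim h^{n(a,b,c)}\tilde\bb(a,b,c)$. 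Granting that, your transpose computation is fine, though you can avoid it entirely: $T^{-1}=\mathrm{diag}(u_i^{-1}h^{n_i})(T')^{-1}$ has entries in $\BO^+$ (equivalently, $\bbG(H)\subset\mathcal{S}^+_p(\Sigma)$), so $\vec c=T^{-1}\vec m$ shows each $c_i$ lies in the fractional ideal generated by the $m_j$, whence $(1)=(c_i)\subset(m_j)$ and $h^{(d-1)g}\in J_p(M)$ directly.
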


\begin{proof}
Suppose $M$ has a Heegaard splitting of genus $g$. 
Then $M$ can be obtained from the standard Heegaard genus $g$ splitting of $S^3$  by modifying the gluing map by
$f \in \Gamma(\Si)$. 
Write $\rho_p(f)(\emptyset)= \sum_{a,b,c} x_{a,b,c}  \tilde \bb(a,b,c)$.  Then the $\BO^+$-deal generated by $ \{ x_{a,b,c} \}_{a,b,c} $ is the trivial ideal $(1)$.
We have that  $J_p(M)$ is the ideal generated by $\{  I_p(M,  \tilde \bg(\alpha,\beta,\gamma) )\}_{\alpha,\beta,\gamma} $, where $(\alpha,\beta,\gamma)$ also varies through the small   colorings of our Lollipop spine for $H$. 

\begin{align}\notag& I_p(M,  \tilde \bg(\alpha,\beta,\gamma) )=(( \rho_p(f)(\tilde \bb(0,0,0)),   \tilde \bg(\alpha,\beta,\gamma) ))\\ \notag &= ( \sum_{a,b,c} x_{a,b,c}\   \tilde \bb(a,b,c) ,   \tilde \bg(\alpha,\beta,\gamma)  ))=  x_{\alpha,\beta,\gamma}\   \left(\left(\tilde \bb(\alpha,\beta,\gamma),\tilde \bg(\alpha,\beta,\gamma)\right) \right)\\ \notag & \sim
x_{\alpha,\beta,\gamma} h^{ \lceil \frac 1 2 (\sum_i \alpha_i)\rceil + \sum_j \beta_j}.
\end{align}

As $(d-1)g \ge \lceil \frac 1 2 (\sum_i \alpha_i)\rceil + \sum_j  \beta_j $ for all small colorings $(\alpha,\beta,\gamma )$, $J_p(M)$ includes the ideal generated by 
 $\{x_{a,b,c} h^{(d-1)g}\}$, but this is  $(1)(h^{(d-1)g})= (h^{(d-1)g}).$
This means that $J_p(M)| (h)^{(d-1)g}$.
 \end{proof}
 
\section{lower bounds on $j_p(M)$}

\begin{prop}\label{c} $j_p(M)\ge (d-1)c(M).$
\end{prop}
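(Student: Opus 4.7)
The plan is to mimic the proof of Proposition \ref{add}, with the $k$ cut surfaces replacing the single splitting sphere. Let $F_1,\dots,F_k\subset M$ be disjoint closed surfaces with $N:=M\setminus\bigcup F_i$ connected, realizing $c(M)=k$. Since $J_p(M)=(h^{j_p(M)})$, it suffices to prove that $v_h(\mathcal{I}_p(M,L)) \geq (d-1)k$ for every framed link $L\subset M$, where $v_h$ denotes the $h$-adic valuation.

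First I would isotope $L$ into general position with respect to $\bigcup F_i$ and attempt the Jones--Wenzl reduction used in Proposition \ref{add}: on any $m_i$ strands crossing $F_i$ transversally, insert the $m_i$-th or $(p-1)$-st Jones--Wenzl idempotent, and use its recursive definition to replace $\mathcal{I}_p(M,L)$ by an $\BO$-linear combination of invariants of links with fewer intersections. For $F_i=S^2$ the idempotent term itself vanishes by $V_p(S^2,\text{point colored }m)=0$, but in higher genus this vanishing fails, so a refined argument is needed: one must analyze the image of the idempotent under the cut-and-glue map along $F_i$ and show directly, via a linking or homology computation, that it still drops out or carries an extra factor of $h^{d-1}$. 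Iterating this across all $F_i$ reduces to the case $L\subset N$.

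With $L\subset N$, cut $M$ along $\bigcup F_i$ to obtain a connected cobordism $N'$ with $\partial N'=\bigsqcup_i(F_i\sqcup\overline{F_i})$; $M$ is recovered by identifying each $F_i$ with $\overline{F_i}$. The integral TQFT presents $\mathcal{I}_p(M,L)$ as the contraction of $Z_p(N',L)\in\mathcal{S}^+_p(\partial N')$ along each paired boundary, which expands in the orthogonal lollipop basis $\{\tilde\bb(a,b,c)\}$ of each $\mathcal{S}^+_p(F_i)$ against its Hopf-dual $\{\tilde\bg(a,b,c)\}$, with diagonal pairings $((\tilde\bb(a,b,c),\tilde\bg(a,b,c)))\sim h^{\lceil \sum a/2\rceil +\sum b}$ from the theorem preceding this section.

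The main obstacle is to certify that each of the $k$ independent boundary identifications contributes an additional factor of $h^{d-1}$ beyond the overall $h^{d-1}$ built into the normalization of $\mathcal{I}_p$. The model cases $M=\#^k(S^1\times S^2)$, where $j_p=k(d-1)$ by combining Proposition \ref{add} with $j_p(S^1\times S^2)=d-1$ (which follows from Theorem \ref{genus} and the calculation of $\mathcal{I}_p(S^1\times S^2,\emptyset)$), confirm the target bound and the correct per-surface contribution. I would induct on $k$: cutting along $F_k$ alone, the empty-coloring summand is handled by applying the inductive hypothesis to a manifold of cut number $\geq k-1$, while the non-empty-coloring summands carry strictly positive $h$-divisibility from the Hopf-pairing formula above. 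A careful accounting of the $h$-valuations in the resulting trace should then yield $v_h(\mathcal{I}_p(M,L))\geq(d-1)k$.
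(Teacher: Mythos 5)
The paper disposes of this proposition in one line: it cites \cite[Thm 15.1]{GM1}, which states precisely that $I_p(M,G)\in h^{(d-1)c(M)}\BO$ for every colored graph $G$ in $M$. What you have written is an attempt to reconstruct the proof of that cited theorem from scratch, and as it stands it has two genuine gaps at exactly the hard points. First, the reduction to links disjoint from the cut surfaces does not go through: the Jones--Wenzl trick in Proposition \ref{add} works only because $V_p(S^2,\text{point colored }m)=0$, and for a positive-genus $F_i$ the corresponding space is nonzero, so inserting the idempotent kills nothing. You acknowledge this and defer to ``a refined argument,'' but that refined argument is the actual content of the step; the correct fix (as in \cite{GM1}) is not to reduce to disjoint links at all but to prove the divisibility statement for surfaces carrying colored points, working with the lattices $\mathcal{S}^+_p(F_i,\text{colored points})$ directly.

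Second, your $h$-valuation accounting in the trace does not close. Writing $I_p(M,L)$ as a contraction over the $k$ paired boundary components, the diagonal term indexed by the empty coloring on $F_k$ gets only $h^{(d-1)(k-1)}$ from your inductive hypothesis, so you still owe a factor of $h^{d-1}$ for that term, and you have not identified where it comes from. For the nonempty colorings, ``strictly positive $h$-divisibility'' is far too weak: the exponent $\lceil \tfrac12\sum_j a_j\rceil+\sum_j b_j$ equals $1$ for the smallest nonzero colorings, whereas you need $d-1$ per surface; moreover contracting against the Hopf pairing requires the dual basis, which introduces \emph{negative} powers of $h$ (since $((\tilde\bb,\tilde\bg))\sim h^{e}$ with $e>0$), a point your sketch glosses over. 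The consistency check against $\#^k(S^1\times S^2)$ confirms the answer but is not evidence for the mechanism. Unless you supply the lattice-theoretic input that each non-separating gluing contributes $h^{d-1}$ --- which is the substance of \cite[Thm 15.1]{GM1} --- the argument is a plausible outline rather than a proof; the economical route is simply to invoke that theorem.
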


\begin{proof}By  \cite[Thm 15.1]{GM1} $I_p(M,G) \in h^{(d-1)c(M)} \BO.$
\end{proof}

This lower bound for $j_p(M)$ can sometimes be improved as follows.  
A $p$-surface is a generalized  surface which allows local singularities where $p$-sheets come together along simple closed curves called the 1-strata. See \cite{G2}.
A good $p$-surface is a $p$-surface where the neigborhood of the $1$-strata consists consists of the union of mapping cones of maps of a circle to a circle with degrees multiples of $p$. 
 The $p$-cut number of $M$  is the maximal number of disjoint good $p$-surfaces that you can embed in $M$ .

\begin{prop} \label{lower}$j_p(M)\ge {(d-1)c_p(M)} $.\end{prop}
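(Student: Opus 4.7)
My plan is to reduce this proposition to Proposition \ref{c} by means of the weak $p$-congruence invariance of $j_p$ established in Theorem \ref{congI}. Specifically, I will argue that if $M$ contains $n$ disjoint good $p$-surfaces $F_1,\ldots,F_n$ with $M\setminus\bigcup F_i$ connected, then there is a 3-manifold $M'$ weakly $p$-congruent to $M$ and containing $n$ disjoint honest embedded surfaces with connected complement. Granting this, Theorem \ref{congI} gives $j_p(M)=j_p(M')$, and Proposition \ref{c} applied to $M'$ gives $j_p(M')\ge (d-1)c(M')\ge (d-1)n$. Taking the supremum over such configurations yields $j_p(M)\ge (d-1)c_p(M)$.

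The reduction proceeds one 1-stratum at a time. Let $\gamma$ be a 1-stratum of some $F_i$, and choose a tubular neighborhood $T\cong \gamma\times D^2$ disjoint from the other $F_j$ and from the other 1-strata. By the good $p$-surface hypothesis, $F_i\cap T$ is a union of mapping cylinders of maps $S^1\to\gamma$ whose degrees are all divisible by $p$; in particular, each sheet meets $\partial T$ in a simple closed curve whose slope in the standard meridian--longitude basis involves a multiple of $p$. I then perform a $1/p$-surgery on $\gamma$; this is weak type-$p$ because $\ell=p\equiv 0\pmod p$. Inside the replacement solid torus $T'$, the divisibility of the sheet degrees by $p$ is precisely the arithmetic condition that lets the boundary curves of the sheets on $\partial T'$ bound disjoint embedded discs or annuli, which recombine with the unchanged part of $F_i$ to produce an honest embedded surface $F_i'$ in place of $F_i$. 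Iterating across all 1-strata of all $F_i$ produces the desired $M'$ and surfaces $F_i'$, and the complement $M'\setminus\bigcup F_i'$ is connected because every modification is supported in an arbitrarily small neighborhood of a 1-stratum and so does not disconnect the already-connected complement of $\bigcup F_i$ in $M$.

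The hard part is verifying the local desingularization in detail: that $1/p$-surgery on $\gamma$ (or, if necessary, a composition of several weak type-$p$ surgeries supported near $\gamma$) really does convert the multi-sheeted mapping-cylinder structure into a smoothly embedded surface, and that the spanning surfaces chosen in $T'$ for the different sheets can be taken to be mutually disjoint. This reduces to a slope-and-framing computation on $\partial T'$, in which the divisibility of every sheet degree by $p$ is the decisive input. Once this local statement is in hand, the remainder of the argument is formal from Theorem \ref{congI} and Proposition \ref{c}.
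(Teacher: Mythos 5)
Your route is genuinely different from the paper's, but it has a real gap. The paper's proof is a one\-/line citation of \cite[Theorem 4.2]{G2}, which asserts directly that $I_p(M,G)\in h^{(d-1)c_p(M)}\BO$ for every colored graph $G$ in $M$; since every generator of $J_p(M)$ is then divisible by $h^{(d-1)c_p(M)}$, the inequality is immediate, exactly parallel to how Proposition \ref{c} follows from the divisibility statement for honest surfaces. You instead try to reduce to Proposition \ref{c} by desingularizing the good $p$-surfaces via weak type-$p$ surgeries and invoking Theorem \ref{congI}. The outer layer of that reduction is formally fine, but you have pushed the entire mathematical content into the local desingularization lemma, which you explicitly leave unproved (``the hard part''). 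As submitted, this is a proof plan with its key step missing, not a proof.

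Moreover, the local step is doubtful as you state it. A neighborhood of a single $1$-stratum $\gamma$ is a union of \emph{several} mapping cones, whose degrees $k_1p,k_2p,\dots$ need not coincide, and the corresponding sheets meet $\partial T$ in curves of generally different slopes. A single $1/p$-surgery (indeed any single $n/\ell$-surgery) on $\gamma$ converts exactly one slope into the meridian of the new solid torus; the remaining sheet boundaries will not bound disks there, and whether they bound mutually disjoint embedded surfaces, disjoint from the first sheet's disks and without disconnecting the complement, is precisely the slope-and-homology bookkeeping you defer --- and it is not clear it always works out. Note also that the natural direction of the correspondence in \cite{G2} is the opposite of the one you need: an honest surface in the surgered manifold pulls back to a good $p$-surface in the original manifold, which is why $p$-surfaces are defined this way; inverting that correspondence is not automatic. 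To complete your argument you would have to prove the local lemma in detail (possibly allowing several surgeries near each $1$-stratum); the more efficient fix is to argue as the paper does, directly from the divisibility result of \cite[Theorem 4.2]{G2}.
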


\begin{proof} By  \cite[Theorem(4.2)]{G2}, $I_p(M,G) \in h^{(d-1)c_p(M)} \BO.$
\end{proof}

\section{Some Calculations and Consequences}
\begin{prop}\label{e} The following are equivalent:
\begin{enumerate}
\item  $M$ is a $\BZ_p$-homology sphere.
\item $j_p(M)=0$.
\item$j_p(M)< d-1.$
\item $c_p(M)= 0.$

\end{enumerate}
\end{prop}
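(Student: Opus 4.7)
The plan is to establish the cyclic implications $(1) \Rightarrow (2) \Rightarrow (3) \Rightarrow (4) \Rightarrow (1)$. The first three are essentially immediate from material already in the paper. For $(1) \Rightarrow (2)$, I would invoke the result of H.\ Murakami \cite{M} recalled in the introduction: when $M$ is a $\BZ_p$-homology sphere, $\mathcal{I}_p(M,\emptyset)$ is not divisible by $h$, so $J_p(M)$ contains an element of $h$-valuation $0$, forcing $j_p(M) = 0$. The implication $(2) \Rightarrow (3)$ needs only $d - 1 \ge 1$, which holds because $p > 3$ gives $d = (p-1)/2 \ge 2$. For $(3) \Rightarrow (4)$, Proposition \ref{lower} gives $(d-1) c_p(M) \le j_p(M) < d-1$, and this forces the non-negative integer $c_p(M)$ to vanish.

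The substantive implication is $(4) \Rightarrow (1)$, which I would prove contrapositively: assume $M$ is not a $\BZ_p$-homology sphere and produce a non-separating good $p$-surface. From $H_1(M;\BZ_p) \ne 0$ (and hence $H^1(M;\BZ_p) \ne 0$), pick a nonzero $\alpha \in \Hom(\pi_1(M), \BZ_p)$. Realize $\alpha$ by a classifying map $f\colon M \to L^\infty_p$, and by cellular approximation push $f$ into the $2$-skeleton $P_p = S^1 \cup_{z\mapsto z^p} D^2$. Make $f$ transverse to a regular value $x_0$ in the interior of the $1$-cell of $P_p$. A neighborhood of $x_0$ in $P_p$ is an ``open book'' with $p$ half-disk pages meeting along an arc, so $F := f^{-1}(x_0)$ inherits exactly the local model required for a good $p$-surface: the $2$-dimensional strata come from preimages of the pages, and $p$ sheets meet along circles coming from preimages of the arc. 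Finally, $F$ is non-separating because its Poincar\'e dual is the chosen nonzero $\alpha$.

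The main obstacle is making the geometric realization in $(4) \Rightarrow (1)$ rigorous: one must verify that a generic PL map $f \colon M \to P_p$ really can be arranged so that the local behavior near $f^{-1}(x_0)$ matches the mapping-cone-of-degree-$p$-multiple condition in the definition of \emph{good} $p$-surface in \cite{G2}, and one must establish the homological criterion that a good $p$-surface separates iff its class in $H_2(M;\BZ_p)$ vanishes. Both should follow by triangulating $M$ compatibly with $f$ and running a direct Mayer--Vietoris argument, but this is where the care is required.
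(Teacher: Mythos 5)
Your cycle $(1)\Rightarrow(2)\Rightarrow(3)\Rightarrow(4)\Rightarrow(1)$ is exactly the paper's, and the first three arrows are handled identically: $(1)\Rightarrow(2)$ by Murakami \cite{M}, $(2)\Rightarrow(3)$ trivially from $d-1\ge 1$, and $(3)\Rightarrow(4)$ from Proposition \ref{lower}. Those are fine. The difference is $(4)\Rightarrow(1)$: the paper disposes of it in one line by citing \cite[Prop 9]{GQ}, whereas you try to reprove that result by a transversality argument. Your sketch points in the right direction, but the two places where you yourself say ``care is required'' are not minor bookkeeping; they are the entire content of the cited proposition, and as stated your argument has real gaps there.

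First, goodness. If you make $f\colon M\to P_p$ generic and take $F=f^{-1}(x_0)$, you certainly get a $p$-surface: $p$ sheets meeting along circles. But \emph{good} (as defined in this paper, following \cite{G2}) requires the neighborhood of each $1$-stratum circle $C$ to be a union of mapping cones of circle maps of degree a multiple of $p$; since the link of $C$ in $F$ is a $p$-fold cover of $C$ whose component degrees sum to $p$, this forces a single component covering with degree exactly $p$, i.e.\ cyclic monodromy of the $p$ sheets around $C$. A generic preimage gives an arbitrary permutation monodromy (e.g.\ trivial monodromy yields $p$ degree-one circles), so goodness is \emph{not} automatic from transversality. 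The standard way to guarantee it is the Bockstein construction: represent the integral Bockstein $\tilde\beta(\alpha)\in H^2(M;\BZ)$ by a link $\gamma$, note that $\alpha$ restricted to $M\setminus\nu(\gamma)$ lifts to an integral class represented by $g\colon M\setminus\nu(\gamma)\to S^1$, take $g^{-1}(\mathrm{pt})$, and cap its boundary on each $\partial\nu(\gamma_i)$ by a degree-$p$ mapping cone onto the core. Second, connectivity of the complement: ``Poincar\'e dual nonzero $\Rightarrow$ non-separating'' is not formal here. $F$ need not be connected, and even for a connected good $p$-surface the ordinary-surface criterion (separating iff zero in $H_2(M;\BZ_2)$) does not transfer verbatim, since a loop crossing a separating $F$ has equal numbers of crossings in each direction between complementary components but the dual class counts co-orientation signs over \emph{all} $2$-strata, including those with both sides in one component; with $p$ odd the parity argument gives nothing. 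Finally, a smaller point: compressing the classifying map $M\to L^\infty_p$ into the $2$-skeleton $P_p$ is not achieved by cellular approximation alone (that only lands you in the $3$-skeleton), and since $\pi_2(P_p)\neq 0$ there is an obstruction to discharge. None of this is fatal --- it is presumably what \cite[Prop 9]{GQ} does --- but your proposal does not yet prove $(4)\Rightarrow(1)$.
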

\begin{proof}
(1) $\implies$ (2) follows from a theorem of H. Murakami \cite{M}.
(3) $\implies$ (4) by Proposition \ref{lower}.
(4) $\implies$ (1) by \cite[Prop 9]{GQ}.
\end{proof}

\begin{prop}
$j_p(\#^k S^1 \times S^2) =k(d-1)$.
\end{prop}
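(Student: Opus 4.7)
The plan is to invoke the sandwich already assembled in the paper, namely $(d-1)c(M)\le j_p(M)\le (d-1)g(M)$ from Proposition \ref{c} and Theorem \ref{genus}, and observe that for $M=\#^k S^1\times S^2$ the two ends coincide.

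First I would compute $g(\#^k S^1\times S^2)=k$ and $c(\#^k S^1\times S^2)=k$. The Heegaard genus is at most $k$ via the obvious connected sum of genus one splittings of $S^1\times S^2$, and at least $k$ since $\beta_1=k$. The cut number is at most $g=k$ by the theorem in the introduction (or directly since $\pi_1$ has rank $k$), and at least $k$ because the $k$ disjoint essential $2$-spheres used in the connected sum decomposition leave a connected complement.

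Combining these with the established bounds gives
\[
(d-1)k = (d-1)c(M)\le j_p(M)\le (d-1)g(M)=(d-1)k,
\]
so $j_p(\#^k S^1\times S^2)=k(d-1)$, as claimed.

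Alternatively, and perhaps more elegantly, I would reduce to $k=1$ using Proposition \ref{add}: since $j_p$ is additive under connected sum it suffices to verify $j_p(S^1\times S^2)=d-1$. For $k=1$ one has $c=g=1$, so the same sandwich pinches $j_p(S^1\times S^2)$ to $d-1$, and then additivity finishes the proof. There is no substantive obstacle here; the only small thing to check carefully is $c(\#^k S^1\times S^2)\ge k$ (or equivalently that disjoint connect-sum spheres give a connected complement), which is immediate from the standard picture.
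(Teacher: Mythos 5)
Your argument is correct, but it replaces half of the paper's proof with a different ingredient. The paper also gets the lower bound from $j_p(M)\ge (d-1)c(M)$ (Proposition \ref{c}) with $c(\#^k S^1\times S^2)\ge k$, but for the upper bound it does not pass through the Heegaard genus: it observes directly that $I_p(\#^k S^1\times S^2,\emptyset)$ is a unit times $h^{k(d-1)}$, so the ideal $J_p$ already contains $h^{k(d-1)}$ and hence $j_p\le k(d-1)$. Your route via Theorem \ref{genus} and $g(\#^k S^1\times S^2)=k$ is equally valid and has the virtue of staying entirely inside the sandwich stated in the introduction, at the cost of needing the (easy but not free) lower bound $g\ge \beta_1=k$; the paper's route is a one-line computation of a single bracket evaluation and is the model it reuses for $S^1\times S^1\times S^1$, where the genus bound would \emph{not} pinch ($g=3$ there but $j_p=d-1$). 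Your reduction to $k=1$ via Proposition \ref{add} is also fine. One small correction: the spheres realizing $c(\#^k S^1\times S^2)\ge k$ are not ``the spheres used in the connected sum decomposition'' --- those are separating, and removing them disconnects $M$. You want the $k$ non-separating spheres $\{\mathrm{pt}\}\times S^2$, one in each summand, whose complement is connected; equivalently, cite $c(M)=\mathrm{corank}\,\pi_1(M)=k$ via \cite{J}.
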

\begin{proof} $c(\#^k S^1 \times S^2)\ge k$ and $I_p( \#^k S^1 \times S^2)$ is a unit times $h^{k(d-1)}.$
\end{proof}

\begin{prop}
$j_p(L(pn,q)) =d-1$.
\end{prop}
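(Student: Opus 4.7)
The plan is to sandwich $j_p(L(pn,q))$ between the bounds already established in the paper. The upper bound will come from the Heegaard genus estimate, and the lower bound from the $\BZ_p$-homology sphere characterization.

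For the upper bound, observe that $L(pn,q)$ admits a genus-one Heegaard splitting (every lens space does), so $g(L(pn,q)) = 1$ (assuming $pn \neq \pm 1$, which holds since $p > 3$). Then Theorem \ref{genus} immediately gives
\[ j_p(L(pn,q)) \leq (d-1)\, g(L(pn,q)) = d-1. \]

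For the lower bound, compute $H_1(L(pn,q);\BZ_p) = \BZ/pn \otimes \BZ/p = \BZ/p \neq 0$, so $L(pn,q)$ is not a $\BZ_p$-homology sphere. By Proposition \ref{e}, failure of condition (1) forces failure of condition (3), hence $j_p(L(pn,q)) \geq d-1$.

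Combining the two inequalities yields $j_p(L(pn,q)) = d-1$. There is no serious obstacle: the entire argument is just an application of Theorem \ref{genus} together with Proposition \ref{e}, the only mild input being the standard facts that every lens space has Heegaard genus one and that $\gcd(pn,p) = p$.
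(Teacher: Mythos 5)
Your proof is correct, but it takes a genuinely different route from the paper. The paper's proof is a one-line appeal to congruence invariance: $L(pn,q)$ is weakly $p$-congruent to $S^1\times S^2$, so by Theorem \ref{congI} its $j_p$ equals $j_p(S^1\times S^2)=d-1$ (the $k=1$ case of the preceding proposition). You instead sandwich the invariant: the genus-one Heegaard splitting gives $j_p\le d-1$ via Theorem \ref{genus}, and the computation $H_1(L(pn,q);\BZ_p)=\BZ/\gcd(pn,p)=\BZ_p\neq 0$ shows $L(pn,q)$ is not a $\BZ_p$-homology sphere, so the negation of condition (1) in Proposition \ref{e} forces the negation of condition (3), i.e.\ $j_p\ge d-1$. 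Both arguments are legitimate given where the proposition sits in the paper (it follows Proposition \ref{e} and the $\#^k S^1\times S^2$ computation). Your version has the advantage of not requiring one to verify the weak $p$-congruence of $L(pn,q)$ with $S^1\times S^2$ (a fact the paper asserts without detail, resting on the surgery description of lens spaces); the cost is that your lower bound routes through the full cycle of equivalences in Proposition \ref{e}, which ultimately rests on \cite[Prop 9]{GQ} and Proposition \ref{lower}, whereas the paper's argument needs only Theorem \ref{congI}. One trivial point worth noting: if $n=0$ then $L(0,q)=S^1\times S^2$, which still has Heegaard genus one, so your parenthetical about $pn\neq\pm1$ covers all cases.
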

\begin{proof} $L(pn,q)$ is weakly $p$-congruent to $S^1 \times S^2$. \end{proof}

The Breiskorn manifold $\Sigma(2,3,6)$ is a cohomology $\#^2 S^1 \times S^2$

\begin{prop}
$j_p(\Sigma(2,3,6)) = d-1$.
\end{prop}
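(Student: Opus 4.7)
The plan is to split the equality into matching upper and lower bounds. The lower bound $j_p(\Sigma(2,3,6)) \geq d-1$ is immediate from Proposition \ref{e}: since $\Sigma(2,3,6)$ has the cohomology of $\#^2(S^1 \times S^2)$, its first Betti number is $2$, so $H_1(\Sigma(2,3,6);\mathbb{Z}_p) = \mathbb{Z}_p^2 \neq 0$, hence $\Sigma(2,3,6)$ is not a $\mathbb{Z}_p$-homology sphere. The contrapositive of the implication $(3)\Rightarrow(1)$ in Proposition \ref{e} then forces $j_p(\Sigma(2,3,6)) \geq d-1$.

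For the upper bound $j_p(\Sigma(2,3,6)) \leq d-1$, the natural template, following the preceding two propositions, is to exhibit a weak $p$-congruence between $\Sigma(2,3,6)$ and a reference manifold whose $j_p$ is already known. Since weak type-$p$ surgery preserves $H_1(-;\mathbb{Z}_p)$ (a short Mayer--Vietoris computation: the added relation $n\mu + \ell \lambda = 0$ reduces modulo $p$ to $n\mu \equiv 0$, which is equivalent to $\mu \equiv 0$ since $n$ is a unit mod $p$), the reference must have $\mathbb{Z}_p^2$ for its first $\mathbb{Z}_p$-homology; in particular $S^1 \times S^2$ is not eligible. My approach is to start from a concrete surgery description of $\Sigma(2,3,6)$, for instance the identification with the Heisenberg nilmanifold realized as $(0,0,+1)$-Dehn surgery on the Borromean rings, and modify one surgery coefficient by an amount of the form $1/(kp)$ (a weak type-$p$ modification) with the aim of arriving, after Kirby moves, at a diagram whose $\mathcal{I}_p$-invariant is visibly of exact $h$-valuation $d-1$.

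If such a congruence is hard to exhibit cleanly, the alternative is a direct computation via Theorem \ref{compute}. Take a genus-$2$ Heegaard splitting $\Sigma(2,3,6) = H \cup_f (-H)$, expand $\rho_p(f)(\emptyset) = \sum_{a,b,c} x_{a,b,c}\,\tilde{\bb}(a,b,c)$ in the lollipop basis, and apply the formula from the proof of Theorem \ref{genus} relating $I_p(\Sigma(2,3,6),\tilde{\bg}(\alpha,\beta,\gamma))$ to $x_{\alpha,\beta,\gamma}\,h^{\lceil(\sum_j \alpha_j)/2\rceil + \sum_j\beta_j}$. It would then suffice to exhibit a single small coloring $(\alpha,\beta,\gamma)$ with exponent equal to $d-1$ and coefficient $x_{\alpha,\beta,\gamma}$ a unit in $\BO^+$, which places $h^{d-1}$ in $J_p(\Sigma(2,3,6))$.

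The main obstacle I anticipate is the upper bound. In the congruence approach, a generic weak type-$p$ perturbation of the Borromean surgery diagram does not obviously simplify (since altering a coefficient by $1/(kp)$ typically produces nothing recognizable under Kirby moves), so one must make a non-generic choice; in the Heegaard approach, identifying the small coloring of Hopf-pairing exponent exactly $d-1$ whose coefficient in $\rho_p(f)(\emptyset)$ is a unit requires an explicit recoupling computation for the gluing mapping class of $\Sigma(2,3,6)$, and verifying non-vanishing modulo $h$ is a delicate quantitative fact not supplied by the general machinery.
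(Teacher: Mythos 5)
Your lower bound is exactly the paper's: $\Sigma(2,3,6)$ is a cohomology $\#^2 S^1\times S^2$, hence not a $\BZ_p$-homology sphere, so the equivalences of Proposition \ref{e} force $j_p(\Sigma(2,3,6))\ge d-1$. That half is correct.

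The upper bound is where your proposal has a genuine gap. You offer two strategies --- a weak $p$-congruence from the Borromean-rings surgery description to some reference manifold, or a direct genus-$2$ Heegaard computation via Theorem \ref{compute} --- and you yourself flag that neither is carried out: the first requires a non-generic Kirby-calculus manipulation you do not exhibit, and the second requires an explicit recoupling computation together with a non-vanishing-mod-$h$ verification you do not supply. The paper's route is much shorter, and you miss it: the ``single element of the ideal with $h$-valuation less than $d$'' that you correctly identify as sufficient is simply the \emph{empty link}. Indeed $\mathcal{I}_p(\Sigma(2,3,6),\emptyset)$ is one of the generators of $J_p(\Sigma(2,3,6))=(h^{j_p})$, so $h^{j_p}$ divides it; and it is already known from \cite{G3} and the Cochran--Melvin computations \cite{CM} that $\mathcal{I}_p(\Sigma(2,3,6),\emptyset)$ is not divisible by $h^{d}$, whence $j_p(\Sigma(2,3,6))\le d-1$. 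No new congruence or Heegaard computation is needed. As written, your argument establishes only the inequality $j_p(\Sigma(2,3,6))\ge d-1$.
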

\begin{proof} By Proposition \ref{e}, $j_p(\Sigma(2,3,6)) \ge d-1$. Also $I_p( \Sigma(2,3,6))$ is not divisible by $h^{d}$ \cite{G3,CM}.
\end{proof}

\begin{cor} $\Sigma(2,3,6)$ and $ \#^2 S^1 \times S^2$ are not weakly $p$-congruent.
\end{cor}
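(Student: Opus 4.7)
The plan is to derive the corollary directly from the three results immediately preceding it, using $j_p$ as the distinguishing invariant.

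First I would recall that by Theorem \ref{congI}, the invariant $j_p$ is constant on weak $p$-congruence classes. So to show that $\Sigma(2,3,6)$ and $\#^2 S^1 \times S^2$ lie in different classes, it suffices to exhibit a prime $p > 3$ for which $j_p$ distinguishes them. Actually, the preceding propositions give the values for every such $p$: we have $j_p(\#^2 S^1 \times S^2) = 2(d-1)$ and $j_p(\Sigma(2,3,6)) = d-1$.

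Next I would note that since $p > 3$ we have $d = (p-1)/2 \geq 2$, so $d-1 \geq 1$ and in particular $d - 1 \neq 2(d-1)$. This gives $j_p(\Sigma(2,3,6)) \neq j_p(\#^2 S^1 \times S^2)$, and combined with the contrapositive of Theorem \ref{congI} the two manifolds cannot be weakly $p$-congruent.

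There is essentially no obstacle here: the work is all in the preceding propositions (which compute the two values of $j_p$) and in Theorem \ref{congI} (which makes $j_p$ an invariant of weak $p$-congruence). The corollary is a one-line numerical comparison; the only thing to verify is that one has chosen a valid $p$, and the hypothesis $p > 3$ standing throughout the paper already supplies $d - 1 \geq 1$.
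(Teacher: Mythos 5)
Your proposal is correct and is essentially identical to the paper's proof: the paper also concludes from $j_p(\Sigma(2,3,6)) = d-1 \neq 2(d-1) = j_p(\#^2 S^1 \times S^2)$ together with the invariance of $j_p$ under weak $p$-congruence (Theorem \ref{congI}). Your added remark that $p>3$ guarantees $d-1\ge 1$ is a sensible explicit check that the paper leaves implicit.
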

\begin{proof} \[ j_p(\Sigma(2,3,6)) = d-1 \neq 2(d-1)= j_p(\#^2 S^1 \times S^2)\]
\end{proof}

The above proof is just a rephrasing of a proof in \cite{G2}. This last result also follows from work of
Dabkowski-Przytycki using the Burnside group \cite{DP}. However the same results hold  for $\BD(h')$ of \cite[Theorem 3.15]{G3}. The Burnside method has not been tried on this manifold.

\begin{prop}
$j_p(S^1 \times S^1 \times S^1) = d-1$.
\end{prop}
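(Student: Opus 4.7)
The plan is to match the lower bound from Proposition~\ref{c} with a direct evaluation of $I_p(T^3,\emptyset)$ for the upper bound.

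For the lower bound, I would note that $\pi_1(T^3)=\BZ^3$ is abelian, so any homomorphism onto a free group factors through the abelianization and must have cyclic image (an abelian subgroup of a free group is cyclic). Hence the co-rank of $\pi_1(T^3)$ equals $1$, and by Jaco's theorem cited in the introduction, $c(T^3)=1$. Proposition~\ref{c} then gives $j_p(T^3)\ge d-1$.

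For the matching upper bound, write $T^3=T^2\times S^1$. A standard TQFT computation gives $\langle \Sigma\times S^1\rangle_p = \dim V_p(\Sigma)$ (the partition function of a mapping torus of the identity is the trace of $\mathrm{id}$ on $V_p(\Sigma)$), and for $\Sigma=T^2$ this rank equals $d$ (the $d$ even colorings $0,2,\dots,p-3$ of a one-loop spine). Combining with the normalization $I_p(M)\sim h^{d-1}\langle M\rangle_p$ from the earlier discussion yields $I_p(T^3,\emptyset)\sim h^{d-1}\cdot d$. Because $d=(p-1)/2$ is a rational integer coprime to $p$, and $h$ lies above $p$ in $\BO^+$, the element $d$ is a unit modulo $h$; hence the $h$-adic valuation of $I_p(T^3,\emptyset)$ is exactly $d-1$. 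Theorem~\ref{upper} asserts $J_p(T^3)=(h^{j_p(T^3)})$ is a principal ideal generated by a power of $h$, and since $I_p(T^3,\emptyset)$ (after adjusting by the unit phase $\kappa^{\pm 1}$ if needed to land in $\BO^+$) belongs to $J_p(T^3)$, one concludes $j_p(T^3) \le d-1$. Together with the lower bound, $j_p(T^3)=d-1$.

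The main technical step is justifying $\langle T^2\times S^1\rangle_p \sim d$. This is a standard identity for any TQFT, but if one prefers a hands-on verification, one can use a surgery presentation of $T^3$ (for instance $0$-surgery on the Borromean rings in $S^3$) and directly compute the Kauffman bracket $[L(\omega)]$ for the resulting $\omega$-decorated link. The various phase factors (the $\kappa^{-s(L)}$ normalization and the $\BO$-versus-$\BO^+$ ambiguity for $p\equiv 1\pmod 4$) are all units in $\BO$ and therefore do not affect the $h$-adic valuation.
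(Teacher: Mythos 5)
Your proof is correct and is essentially the paper's own argument: the lower bound comes from $c(T^3)\ge 1$ together with Proposition~\ref{c}, and the upper bound from $I_p(T^3,\emptyset)\sim \dim V_p(T^2)\cdot h^{d-1}=d\,h^{d-1}$ with $d$ a unit modulo $h$. You have merely filled in the details (co-rank of $\BZ^3$, the trace formula for mapping tori, coprimality of $d$ and $p$) that the paper leaves implicit.
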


\begin{proof} $c(S^1 \times S^1 \times S^1)\ge 1$ and $I_p( S^1 \times S^1 \times S^1) \sim \dim(V_p(S^1 \times S^1) h^{d-1}= d h^{d-1}.$
\end{proof}

\begin{prop}
$S^1 \times S^1 \times S^1$  is not weakly $p$-congruent to the connected sum of three $S^1 \times S^2$'s
\end{prop}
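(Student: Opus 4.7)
The plan is to leverage Theorem \ref{congI}, which asserts that $j_p$ is a weak $p$-congruence invariant, and then simply compare the values of $j_p$ on the two manifolds in question. Both numbers have already been computed in the preceding propositions, so the argument reduces to a numerical inequality.

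More precisely, I would first invoke the proposition computing $j_p(\#^k S^1 \times S^2) = k(d-1)$ with $k=3$, giving $j_p(\#^3 S^1 \times S^2) = 3(d-1)$. Next I would cite the proposition immediately above showing $j_p(S^1 \times S^1 \times S^1) = d-1$. Since the hypothesis of the paper is that $p > 3$ is prime, we have $p \geq 5$, hence $d = (p-1)/2 \geq 2$, and in particular $d-1 \geq 1$. Therefore
\[
j_p(S^1 \times S^1 \times S^1) = d-1 \neq 3(d-1) = j_p(\#^3 S^1 \times S^2).
\]

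Finally I would appeal to Theorem \ref{congI}: if the two manifolds were weakly $p$-congruent, their $j_p$ invariants would necessarily coincide, contradicting the above inequality. Hence they are not weakly $p$-congruent.

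There is no real obstacle here; the result is a direct corollary of the two $j_p$ computations together with the invariance theorem, entirely parallel to the earlier corollary separating $\Sigma(2,3,6)$ from $\#^2 S^1 \times S^2$. The only mild subtlety is making sure the hypothesis $p > 3$ is in force so that $d - 1 > 0$ and the two values $d-1$ and $3(d-1)$ are genuinely distinct.
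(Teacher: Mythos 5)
Your argument is correct and is exactly the one the paper intends: the proposition is placed immediately after the computation $j_p(S^1\times S^1\times S^1)=d-1$, and combining that with $j_p(\#^3 S^1\times S^2)=3(d-1)$ and the invariance of $j_p$ under weak $p$-congruence (Theorem \ref{congI}) gives the result, just as in the earlier corollary separating $\Sigma(2,3,6)$ from $\#^2 S^1\times S^2$. The paper leaves this proof implicit (citing \cite{G3} only for proofs of a generalization to non-prime $p$), so your write-up, including the check that $d-1>0$ since $p\geq 5$, fills in precisely the intended details.
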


Two different proofs of the generalization of the above proposition, where $p$ is not required to be a prime (greater than 3) but is instead required to be an integer greater than 2, are given in \cite{G3}. In  \cite{G3},  I asked whether  $S^1 \times S^1 \times S^1$  is weakly $2$-congruent to the connected sum of three $S^1 \times S^2$'s.
Selman Akbulut has pointed out that they are weakly $2$-congruent and that this follows from  \cite[Theorem 3]{N}. See also \cite{AK}.

\end{document}